\begin{document}
\title{Superconvergence of the Direct Discontinuous Galerkin Method for Two-Dimensional Nonlinear Convection-Diffusion Equations}

\author[BNU]{Xinyue Zhang}
\ead{201921130059@mail.bnu.edu.cn}

\author[BNU]{Waixiang Cao\corref{cor}}
\ead{caowx@bnu.edu.cn}
\cortext[cor]{Corresponding author, The research  was supported in part by National Natural Science Foundation of China grant 11871106.}
 
% \author[iapcm]{Wenjun Sun}
% \ead{sun\_wenjun@iapcm.ac.cn} 
\address[BNU]{School of Mathematical Sciences, Beijing Normal University, Beijing, 100875, China}
% \address[iapcm]{Institute of Applied Physics and Computational Mathematics, Beijing, China}
% \address[HKUST1]{Department of Mathematics, Hong Kong University of Science and Technology, Clear Water Bay, Kowloon, Hong Kong} 
%\address[HKUST2]{Shenzhen Research Institute, Hong Kong University of Science and Technology, Shenzhen, China}

\newtheorem{theorem}{Theorem}[section] %
\newtheorem{lemma}{Lemma}[section] %
\renewcommand{\proofname}{\rm \textbf{Proof}}

\newtheorem{corollary}{Corollary}[section] %
\newtheorem{remark}{Remark}[section]
\newtheorem{example}{Example}[section]

% \listoffigures

\begin{abstract}
	This paper is concerned with  superconvergence properties of the direct discontinuous Galerkin (DDG) method for two-dimensional nonlinear convection-diffusion equations.
	 By using the idea of correction function, we prove that,  for  any  piecewise tensor-product polynomials of degree  $k\geq 2$, 
	 the DDG solution is superconvergent at nodes and Lobatto points, with an order of ${\cal O}(h^{2k})$ and ${\cal O}(h^{k+2})$, respectively. 
        Moreover, superconvergence properties  for the derivative approximation are also studied and the superconvergence points are identified 
        at Gauss points,  with an order of ${\cal O}(h^{k+1})$. 
	 Numerical experiments are presented to  confirm the sharpness of  all the theoretical findings.
 \end{abstract}

\begin{keyword}
	nonlinear convection-diffusion equations; direct discontinuous Galerkin method; superconvergence
\end{keyword}

\maketitle

\section{Introduction} 

 In this paper, we investigate the superconvergence behavior of the  DDG method for 
 the following two-dimensional nonlinear convention-diffusion equations
 \begin{equation}\label{2dCD}
	\begin{cases} 
	  u_t+ \nabla\cdot {\bf f}(u)=\triangle u, &(x,y,t)\in [0,2\pi] \times [0,2\pi] \times (0,T],\\
	  u(x,y,0)=u_0(x,y),& (x,y)\in [0,2\pi]\times [0,2\pi], 
	\end{cases} 
	\end{equation}
where  $u_0$, ${\bf f}(u)=(f_1(u),f_2(u))$  are  smooth functions.   For simplicity, we only consider the periodic boundary condition. 

The discontinuous Galerkin (DG) method is a class of finite element method using a completely discontinuous polynomial space 
for the numerical solution and the test functions.  Due to its   flexibilities such as the allowance of hanging nodes, $p$-adaptivity,  extremely local data structure, 
high parallel efficiency,  the DG method has been successfully applied to solve first order PDEs such as hyperbolic conservation laws (see, e.g., 
 \cite{DG-3,DG-2,DG-1,DG-5}).  However, for equations containing higher order spatial derivatives,  such as the convection-diffusion equation and the KdV equation, 
 the DG method cannot be directly applied due to
 the discontinuous solution space at the element interfaces, which is not regular enough to handle higher derivatives. 
 Several DG methods have been suggested in the literature to solve this problem,  including the local discontinuous Galerkin(LDG) method  in \cite{LDG-3,LDG-1,LDG-5},
 % originally  by Cockburn and Shu and further studied in \cite{LDG-2,LDG-3,LDG-4,LDG-5}, 
  the compact DG (CDG) method \cite{CDG-1};  the interior penalty (IP) methods,
 \cite{IPDG-1,IPDG-2,IPDG-4}; the direct discontinuous Galerkin (DDG) method \cite{Liu-2010,OtherDG-1,OtherDG-2}, as well as other DG schemes designed from a practical perspective.

The DDG method was first introduced by Liu and Yan in 2009  for convection-diffusion problems \cite{Liu-2009}. 
The basic idea of this method is to directly force the weak solution formulation of the PDE into the DG function space for both the numerical solution and test functions, without introducing any auxiliary variables or rewriting the original equation into a larger first-order system.  One main advantage of the DDG method lies in that: 
 it is compact in the sense that only the degrees of freedom belonging to neighboring elements are connected in the discretization, which means lower storage requirements and higher computational performance than noncompact schemes.  
 Compared with  LDG methods, DDG methods  need to choose special numerical fluxes for the solution derivative at cell interfaces  to ensure its stability, 
which gives DDG methods extra flexibility and advantage.  It was proved in  \cite{maximum-1} that   the DDG method  satisfies strict maximum principle with at least third order of accuracy, while only second order can be obtained for IPDG and LDG methods \cite{maximum-2}.   For  lower order piecewise constant or linear approximations, 
the DDG method may  degenerate to the IPDG method and shares the same  desired properties of the IPDG solution. While for higher order $P_k (k \geq 2)$ approximation, 
DDG method have quite a few advantages over the IPDG method in some aspects,  e.g.,  superconvergence approximations (see \cite{Miao}). 
  
During  past decades,   superconvergence analysis has attracted many researchers’ attention and significant progress has been. 
 For an incomplete list of references, we refer to
   \cite{Babuska1996,Bramble.Schatz.math.com,ewing-lazarov-wang,Neittaanmaki1987,Schatz.Wahlbin1996}
   for finite element methods (FEM),
   \cite{Cai.Z1991,Cao;Zhang;Zou2012,Cao;zhang;zou:2kFVM} for finite volume methods (FVM), and
\cite{Adjerid;Massey2006,super-4,Cao-Shu-Yang-Zhang:nonlinear,Chen;Shu:SIAM2010,Yang;Shu:SIAM2012}
 for  DG methods, and \cite{super-cao-cd,Miao} for DDG methods. 
In \cite{super-cao-cd}, the authors studied superconvergence phenomenon of DDG methods for the linear convection-diffusion equation in the one-dimensional setting. 
 Under  a special choice of the numerical flux (especially the choice of the parameter in the DDG numerical flux),  a 
$(k+2)$-th order  superconvergence of the numerical solution at Lobatto points, $(2k)$-th order at nodes,  and a $(k + 1)$-th order of the derivative approximation 
at Gauss points, are obtained. 

The main purpose of the current work is to study and reveal the superconvergence phenomenon of the DDG method for two-dimensional nonlinear convection diffusion equation. To our best knowledge,
no superconvergence results of DDG methods applied to nonlinear convection-diffusion equations are available in the literature.  The main difficulties in the superconvergence analysis for nonlinear problems are  how to deal with the nonlinear
terms in the error equation and how to improve the error accuracy.  
To deal with the two issues, we  first  use Taylor expansion to linearize the error equation and then reduce
 the superconvergence analysis for nonlinear problems  into two parts: 
  a  lower order linear term and high order nonlinear term.   Then we use the idea of correction function
to deal with the linear part to obtain higher-order accuracy and finally achieve our superconvergence goal.  
To be more precise,   we establish superconvergence result for the DDG approximation at nodes, Lobatto points and Gauss points (derivative approximation), whose convergence 
rates are $2k$, $k+2$, and $k+1$, respectively. As we may recall, 
all these superconvergence results are similar to these for the one-dimensional linear problems in \cite{super-cao-cd}.   
It should be emphasized that the convergence analysis for nonlinear problems are much more  sophisticated than that for the linear case.

The rest of the paper is organized as follows. In section 2, we present the DDG schemes
and then discuss  the choice of numerical fluxes and stability results  for the equation \eqref{2dCD}.  Section 3 and 4 are the main part of this paper, where we construct a series of special correction functions, by identify out the lower order terms in the scheme. With suitable initial discretizations and careful choice of numberical fluxes, superconvergence results for Lobatto, Gauss points and nodes are proved respectively. In Section 5, we present several numerical examples to  validate our theoretical results. Some concluding remarks are provided in Section 6. 

Throughout this paper, we adopt standard notations for Sobolev spaces such as $W^{m,p}(D)$ on subdomain $D\subset\Omega$ equipped with the norm $\|\cdot\|_{m,p,D}$ and semi-norm $| \cdot|_{m,p,D}.$ When $D = \Omega$, we omit the index $D$. If $p=2$, we set $W^{m,p}(D) = H^m(D),~\|\cdot\|_{m,p,D} = \|\cdot\|_{m,D},$ and $|\cdot|_{m,p,D} = |\cdot|_{m,D}.$ We use the  notation $A\lesssim B$ to indicate that A can be bounded by B multilpied by a constant independent of the mesh size h.

\section{DDG schemes}

 Let $0 = x_{\frac 1 2} < x_{\frac 3 2}< \cdots <x_{N_x+\frac 1 2}=2\pi$ and $0=y_{\frac 1 2} < y_{\frac 3 2}< \cdots <y_{N_y+\frac 1 2}=2\pi$. 
 For any positive integer $r$,
  we define $\mathbb{Z} _r = \{1,2,\cdots ,r\}$, and denote  by ${\cal T}_h$ 
  the rectangular partition of  $\Omega:=[0, 2\pi]\times [0, 2\pi]$. That is
\begin{equation*}
	\mathcal{T} _h=\{\tau_{i,j}=[x_{i-\frac 1 2},x_{i+\frac 1 2}] \times [y_{j-\frac 1 2},y_{j+\frac 1 2}]:(i,j) \in \mathbb{Z} _{N_x}\times\mathbb{Z} _{N_y} \}.
\end{equation*}
Denote by $\tau_i^x=[x_{i-\frac 12}, x_{i+\frac 12}], \tau_j^y=[y_{j-\frac 12}, x_{j+\frac 12}]$, and 
$h= \max_{\tau\in{\cal T}_h} h_\tau$ the meshsize of  ${\cal T}_h$, where $h_{\tau}$  is the diameter of the element  $\tau\in {\cal T}_h$. 
We assume that the partition ${\cal T}_h$ is regular. 

 Define the discontinuous ﬁnite element space 
\begin{equation*}
	V^k_h:=\{v\in L^2(\Omega):v|_{\tau} \in \mathbb{Q} _k(x,y)=\mathbb{P} _k(x)\times \mathbb{P} _k(y),\ \ \forall \tau\in{\cal T}_h\},
\end{equation*}
where $\mathbb{P} _k$ denotes the space of polynomials of degree at most $k$ with coefficients as functions of $t$.  
Let $\tau_1$ and $\tau_2$ be two neighboring cells with a common edge $e$, 
we denote by,   for any function $w$,  $\{w\}$ and $[w]$
 the average and the jump  of $w$, respectively. 
That is, 
\[
   \{w\}|_e=\frac{1}{2}(w_1+w_2), \ \ [w]|_e=w_2-w_1. 
\]
 Here $w_i=w|_{\partial\tau_i}, i=1,2$, and 
 the jump term is  
calculated as a forward difference along the normal direction, which is defined to be oriented from $\tau_1$ to $\tau_2$.

%Since functions in $V_h^k$ may be discontinuous across element boundaries, 
%we denote $v(x_{i+\frac 1 2}^-,\cdot,\cdot), v(x_{i+\frac 1 2}^+,\cdot,\cdot)$ as the left and right 
%limits of v at $(x_{i+\frac 1 2},\cdot,\cdot)$, and adopt the following notations:
%\begin{equation*}
%\begin{split} 
%	[v]|_{x_{i+\frac 1 2}}:=v(x^+_{i+\frac 1 2},y,t) - v(x^-_{i+\frac 1 2},y,t), \quad &[v]|_{y_{j+\frac 1 2}}:=v(x,y^+_{j+\frac 1 2},t) - v(x,y^-_{j+\frac 1 2},t),\\
%	\{v\}|_{x_{i+\frac 1 2}}=\frac 1 2(v(x^+_{i+\frac 1 2},y,t) +v(x^-_{i+\frac 1 2},y,t)),  \quad &\{v\}|_{y_{j+\frac 1 2}}=\frac 1 2(v(x,y^+_{j+\frac 1 2},t) +v(x,y^-_{j+\frac 1 2},t)).
% \end{split} 
% \end{equation*}
The DDG method for \eqref{2dCD} is to find a $u_h \in V_h^k$ such that for any $v_h\in V^k_h$, 
\begin{equation}\label{DDGloc}
   (\partial_tu_h,  v_h)_{\tau}=({\bf f}-\nabla u, \nabla v_h)_{\tau}-\int_{\partial\tau} v_h ( \widehat{{\bf f}(u_h)}-\widehat {\nabla u_h}) \cdot {\bf n} \mathrm ds-B,\ \ \forall \tau\in{\cal T}_h, 
\end{equation}
where $(u,v)_\tau=\int_{\tau} uv dxdy$, 
 ${\bf n}$ denotes the unit outward  vector of $\tau$,  $\widehat{{\bf f}(u_h)}, \widehat {\nabla u_h}$ are numerical fluxes, 
and  $B$ is the interface correction defined as 
\[
   B:=\frac 1 2 \int _{\partial\tau} [u_h] \nabla v_h \cdot {\bf n} \mathrm ds. 
\]
%\begin{equation*}
%	\begin{split} 
%		B&=\frac 1 2
%	\{\int _{\tau_j^y}([u_h]v_h)|_{\partial \tau _i^x} dy+\int _{\tau _i^x}([u_h]v_h)|_{\partial \tau_j^y} dx\},
%	\end{split} 
%\end{equation*}
% Here  we adopt some notations, namely
% -(\widehat{f_1(u_h)} v_h|_{\partial \tau _i^x})_j-(\widehat{f_2(u_h)} v_h|_{\partial \tau_j^y})_i
%\begin{align*}
%	&(\varphi ,v)_{i,j} = \iint _{\tau_{i,j}} \varphi v \mathrm dx \mathrm dy,\quad (\varphi ,v)_{i} = \int _{\tau _i^x} \varphi v \mathrm dx, \quad \quad (\varphi ,v)_{j} = \int _{\tau_j^y} \varphi v \mathrm dy\\
%	&v|_{\partial \tau _i^x} = v(x^-_{i+\frac 1 2},y,t)-v(x^+_{i-\frac 1 2},y,t),\quad v|_{\partial \tau_j^y} = v(x,y^-_{j+\frac 1 2},t)-v(x,y^+_{j-\frac 1 2},t).
% \end{align*}

  To   ensure the stability as well as for the accuracy of the DDG method, the choice of   numerical fluxes is of great importance. 
  For the convection flux, we follow 
 \cite{Liu-2015} and take the entropy flux (or E-flux) as our numerical flux. That is,  at any $e\in \tau_1\cap\tau_2$,  let 
  $u_i=u|_{\partial\tau_i}, i=1,2$, and the normal direction  be oriented from $\tau_1$ to $\tau_2$, 
then $\widehat{{\bf f} }(u_1,u_2)= (\widehat{ f_1 }(u_1,u_2),\widehat{ f_2 }(u_1,u_2))$ satisfies the following condition
 \begin{equation}\label{con-flux}
     {\rm sign }(u_2-u_1) (\widehat{f_i }(u_1,u_2)-f_i(u))\le 0,\quad i=1,2,
 %		\int _e \displaystyle\frac{\widehat{f_n}(u_1,u_2)-f(\xi)\cdot\overrightarrow{n} } {[u]}[a]^2 ds \leq 0,
 \end{equation}
for all $u $ between $u_1$ and $u_2$.  For the diffusion term, we take the follows numerical fluxes: 
\begin{equation}\label{dif-flux}
	\widehat {\nabla u}:=(\widehat{u_x},\widehat{u_y}),\ 
		\widehat{u_x}:= \beta_0h^{-1}[u]+\{u_x\}+\beta_1h[u_{xx}],\quad \widehat{u_y}: = \beta_0h^{-1}[u]+\{u_y\}+\beta_1h[u_{yy}], 
\end{equation}
where $(\beta_0,\beta_1)$ satisfy the following stability condition
\begin{equation*}
	\beta_0 \geq \Gamma (\beta_1),
\end{equation*}
with 
\begin{equation*}
	 \Gamma (\beta_1)= \displaystyle {\sup_{v\in \mathbb{P} ^{k-1}[-1,1]}} \displaystyle\frac{2(v(1)-2\beta_1\partial_{\xi}v(1))^2}{\int_{-1}^1 v^2(\xi) \mathrm d\xi}.
\end{equation*}
Summing up all elements $\tau$ in  \eqref{DDGloc}  and using the periodic boundary condition, we have  
\begin{equation}\label{DDGglo}
	(\partial_t u_h ,v_h)+A(u_h,v_h)=F(u_h,v_h),\ \  \forall v_h\in V_h^k, 
\end{equation}
where $(u,v)=\sum_{\tau\in{\cal T}_h} (u,v)_{\tau}$,  and 
\begin{eqnarray}\label{DDGsubterm}
 && A(u,v)=(\nabla  u,\nabla  v) +\sum_{e\in{\cal E}_h} \int_e 
 ( [v]\widehat{\nabla u}+[u]\{\nabla v\}) \cdot {\bf n} \mathrm ds,\\  
	&& F(u,v)=( {\bf f}(u),\nabla v)+\sum_{e\in{\cal E}_h} \int_e [v]\widehat{{\bf f} (u)}\cdot {\bf n} \mathrm ds. 
\end{eqnarray}
  Here ${\cal E}_h$ denotes the set of all edges of ${\cal T}_h$. 
  
  Due to the special choice of numerical fluxes in \eqref{con-flux}-\eqref{dif-flux},  it was proved in \cite{Liu-2015} that the DDG scheme \eqref{DDGglo}
 is stable and   has Optimal error estimate in $L^2$ norm.  Moreover, there holds 
for any function $v \in V_h^k$  that 
\begin{equation}\label{coercivity}
	F(v,v)\leq 0,\quad A(v,v) \geq \gamma \|v\|_E^2,
\end{equation}
where $\gamma\in(0,1)$ is some positive constant, and 
\begin{equation}\label{energynorm}
	\|v\|_E^2=(\nabla v,\nabla v)+ \frac{\beta_0 }{h}\sum_{e\in{\cal E}_h}\int_{e} [v]^2 \mathrm ds. 
\end{equation}

%From \cite{Liu-2015},  In the rest of this paper, 
% we will mainly study superconvergence behaviour of DDG scheme for two-dimensional nonlinear convection-diffusion equations.

% inverse inequation?

\section{Construction of a special projection of the exact solution.}
 In this section, we will discuss the construction of a special projection $u_I$ of the exact solution, which is superconvergent towards the numerical solution 
 $u_h$ in the $L^2$ norm.  To this end, we begin with the error equation of the DDG scheme, which is the basis of the construction of $u_I$.

 \subsection{Error equations} 
   In the rest of this paper, we will use the following  notation 
 \begin{equation}\label{denote}
	\xi = u_h-u_I,\quad \eta =u-u_I,\ \quad e_h=u-u_h. 
\end{equation}
 Note that the exact solution u of \eqref{2dCD} also satisfies 
\eqref{DDGglo}.  Then  for all $v_h\in V_h^k$,
\begin{equation}\label{exactDDG}
	\left( \partial_tu,v_h\right) + A(u,v_h) = F(u,v_h).
\end{equation}
Subtracting \eqref{DDGglo} from \eqref{exactDDG}  yields the following error equation
\begin{equation}\label{errorequa}
	\left( \partial_t e_h,v_h\right) + A(e_h,v_h) = F(u,v_h)-F(u_h,v_h). 
\end{equation}
  Taking  $v_h=u_h-u_I$ in  \eqref{errorequa},  we have 
\begin{equation}\label{errorequa1}
	\left( \partial_t\xi,\xi\right) + A(\xi,\xi) = \left( \partial_t\eta,\xi\right) + A(\eta,\xi) +H, \quad H := -F(u,v_h)+F(u_h,v_h).
\end{equation}
 In light of  \eqref{coercivity},  we get 
\begin{equation*}
	\frac 1 2 \frac{d}{dt}\|\xi\|_0^2+\|\xi\|_E^2 \leq \left( \partial_t \eta,\xi\right) + A(\eta,\xi) +H.
\end{equation*}
   On  the other hand,  we have from the Taylor expansion that 
	\begin{equation}\label{Taylor}
			{\bf f}(u_h)-{\bf f}(u)=-{\bf f}'(u)e_h+\frac{{\bf f}''}{2} (e_h)^2,\ \ 
			{\bf f}(\{u_h\})-{\bf f}(u)=-{\bf f}'(u)(\{e_h\}) + \frac{\widetilde{ {\bf f}}''}{2} (\{e_h\}) ^2,
	\end{equation}
	where ${\bf f}''={\bf f}''(\theta_1 u+(1-\theta_1) u_h)$ and $\widetilde{{\bf f ''}}={\widetilde {\bf f}}''(\theta_2 u+(1-\theta_2) u_h)$  with $0<\theta_1,\theta_2<1$. 
  Consequently, 
	\begin{eqnarray*} 
			H &=& ( {\bf f}(u_h)- {\bf f}(u),\nabla \xi)+\sum_{e\in{\cal E}_h} \int_e [\xi]\left( \widehat{{\bf f} (u_h)}  - {{\bf f} (\{u_h\})} + {{\bf f} (\{u_h\})}-  {\bf f} (u)  \right)\cdot {\bf n} \mathrm ds \\ 
			   &=& ( -{\bf f}' e_h+\frac{{\bf f}''}{2}|e_h|^2,\nabla \xi)+\sum_{e\in{\cal E}_h} \int_e [\xi]\left( \widehat{{\bf f} (u_h)}  -{\bf f} (\{u_h\}) - {\bf f'}(u)\{e_h\} +\frac{\widetilde{ {\bf f}}''}{2} \{e_h\}^2\right)\cdot {\bf n} \mathrm ds \\ 
			   &=& H_1+H_2+H_3+H_4,
		%	&=&\displaystyle\sum_{\tau_{i,j}}  (f_1(u_h)-f_1(u), \xi_x)_{\tau_{i,j}} + \displaystyle\sum_{\tau_{i,j}} \int_{\tau_j^y}(\widehat{f_1(u_h)}-f_1(\{u_h\})+f_1(\{u_h\})-f_1(u))[\xi]|_{x_{i+ \frac 1 2}}\mathrm dy\\
		%	& + & \displaystyle\sum_{\tau_{i,j}}  (f_2(u_h)-f_2(u), \xi_y)_{\tau_{i,j}} + \displaystyle\sum_{\tau_{i,j}} \int_{\tau _i^x}(\widehat{f_2(u_h)}-f_1(\{u_h\})+f_1(\{u_h\})-f_2(u))[\xi]|_{y_{j+ \frac 1 2}}\mathrm dx.
	\end{eqnarray*}
	where
\begin{eqnarray*}
		&H_{1}& =\sum_{\tau\in{\cal T}_h} ({\bf f}'(u)\xi,\nabla \xi)_{\tau} +  \sum_{\tau\in{\cal T}_h} \int_{\partial\tau} {\bf f'}(u)\{\xi\} [\xi] \cdot {\bf n} \mathrm ds,\\
		&H_{2} &=-\sum_{\tau\in{\cal T}_h} ({\bf f}'(u)\eta,\nabla \xi)_{\tau} - \sum_{\tau\in{\cal T}_h} \int_{\partial\tau} {\bf f'}(u)\{\eta\} [\xi] \cdot  {\bf n}  \mathrm ds,\\
		&H_{3}& = \frac 1 2 \displaystyle\sum_{\tau\in{\cal T}_h} ({\bf f}''|e_h|^2,\nabla \xi)_{\tau} + \frac 1 2\sum_{\tau\in{\cal T}_h} \int_{\partial\tau} \widetilde{ {\bf f}}''\{e_h\}^2[\xi] \cdot{\bf n} \mathrm ds,\\
		&H_{4} &= \sum_{e\in{\cal E}_h} \int_e [\xi]\left( \widehat{{\bf f} (u_h)}  -{\bf f} (\{u_h\}) \right)\cdot {\bf n} \mathrm  ds 
\end{eqnarray*}
  We next estimate $H_i, i\le 4$, respectively. 
 As for  $H_{1}$, a simply integration by parts  yields 
\begin{eqnarray*}
			H_{1} & = &-\frac 1 2\displaystyle\sum_{\tau\in{\cal T}_h} (\nabla {\bf f}'(u),\xi^2)_{\tau} - \frac 1 2 \sum_{\tau\in{\cal T}_h} \int_{\partial\tau} {\bf f'}(u) [\xi]^2\cdot {\bf n} \mathrm ds +
			\sum_{\tau\in{\cal T}_h} \int_{\partial\tau} {\bf f'}(u)\{\xi\} [\xi] \cdot {\bf n} \mathrm ds\\
			& \le & -\frac 1 2\displaystyle\sum_{\tau\in{\cal T}_h} (\nabla {\bf f}'(u),\xi^2)_{\tau} \leq C\|\xi\|_0^2.
\end{eqnarray*}
	To estimate $H_{3}$, we use the Cauchy-Schwarz inequality to derive that 
	\begin{eqnarray*}
				H_{3}& \leq &   C \|e_h\|_{0,\infty} \|e_h\|_0\|\nabla \xi\|_0+
				\frac \gamma 4  \displaystyle\sum_{\tau\in{\cal T}_h} \int_{\partial \tau} \frac{\beta_0 }{h}[\xi]^2 ds + C \|e_h\|^4_{0,\infty}\\
			& \leq &  \frac \gamma 4 \|\xi\|^2_{E}+C\|e_h\|^4_{0,\infty}. 
\end{eqnarray*}
  As it was proved in  \cite{Liu-2015} that $ \|e_h\|_0\lesssim h^{k+1}\|u\|_{k+1}$, 
  we have $\|e_h\|_{0,\infty}\lesssim h^{k+\frac 12}\|u\|_{k+1}$, 
     and thus 
 \[
     H_{3} \le   \frac \gamma 4 \|\xi\|^2_{E}+ C h^{4k+2}\|u\|_{k+1}^2. 
 \]
   To estimate $H_4$, we define 
\begin{equation}\label{alpha}
 \alpha(\widehat{{\bf f}},\zeta):=[u]^{-1}(\widehat{{\bf f}}(u^-,u^+)-{\bf f}(\zeta)):=(\alpha_1,\alpha_2)
 \end{equation} 
  for any function $u$ and any point $\zeta$ between $u^-$ and $u^+$.  Then 
\begin{eqnarray*}
			H_4= \sum_{e\in{\cal E}_h} \int_e [\xi][u_h] \alpha(\widehat{{\bf f}},\{u_h\})\cdot {\bf n} \mathrm  ds 
			\le -
 			\sum_{e\in{\cal E}_h} \int_e [\xi][\eta] \alpha(\widehat{{\bf f}},\{u_h\})\cdot {\bf n} \mathrm  ds, 
\end{eqnarray*}
  where in the last step, we have used the identity  $[u_h]=[\xi]-[\eta]$. 	 Substituting the estimates of $H_i$ into the formula of $H$ yields 
		\begin{equation}\label{H00}
			 H \le  -\mathcal{F}(\eta,\xi) + \frac \gamma 4\|\xi\|_E^2 + Ch^{2(2k+1)}\|u\|_{k+1}^2 + C\|\xi\|_0^2, 
	\end{equation}
	where for any $\varphi, v$, 
\begin{align*}
	\mathcal{F}(\varphi,v)=& (\varphi{\bf f}'(u) ,
	\nabla  v) + \sum_{e\in{\cal E}_h} (\int_e [v]  \{\varphi\}{\bf f}'(u) \cdot {\bf n} \mathrm ds  + \int_e [v][\varphi]  \alpha(\widehat{{\bf f} (u_h)},\{u_h\}) \cdot  {\bf n} \mathrm ds).
\end{align*}
 Plugging \eqref{H00} into \eqref{errorequa1}, we have 
	\begin{equation}\label{eq:1}
				\frac 1 2 \frac{d}{dt}\|\xi\|_0^2 +\frac {3\gamma} {4}\|\xi\|_E^2  \leq   \left( \partial_t\eta,\xi\right) + A(\eta,\xi)+\mathcal{F}(\eta,\xi)+ Ch^{2(2k+1)}\|u\|_{k+1}^2 + C\|\xi\|_0^2.
		\end{equation}
Define 
\begin{equation}\label{a0}
	a(\varphi,v):=\left( \partial_t \varphi,v\right) + A(\varphi,v) -\mathcal{F}(\varphi,v),\quad \forall \varphi, v \in V_h^k. 
\end{equation}
 The error  inequality in \eqref{eq:1} indicates that 
 the error $\|u_h-u_I\|_0$ depends on  the error bound $a(u-u_I, \xi)$. In other words, to achieve  our superconvergence goal, the function 
 $u_I$ should be specially designed such that $a(u-u_I, v)$  is of high order for any function $v\in  V_h^k$.

  For any smooth function $v$ and any interval $\tau_i^x$,    we define  a function $P^{(x)}v \in \mathbb{P}^k(\tau^x_i)$ satisfying the following conditions:  
\begin{subequations}\label{Z}
	\begin{align}
		&\int_{\tau_i^x} (P^{(x)}v-v) \partial ^2_x w \mathrm dx=0, \quad \forall w \in \mathbb{P}^k(\tau_i^x),~ i \in \mathbb{Z}_{N_x}, \label{Za}\\
		&\partial_x\widehat{(P^{(x)}v)} |_{x_{i+\frac 1 2}}:= \beta_0(h_i^x)^{-1}[P^{(x)}v] + \{\partial_x(P^{(x)}v)\} + \beta_1h_i^x[\partial _x^2(P^{(x)}v)]|_{x_{i+\frac 1 2}} = \partial_x v|_{x_{i+\frac 1 2}}, \label{Zb}\\
		&\{P^{(x)}v\}|_{x_{i+\frac 1 2}}=\{v\}|_{x_{i+\frac 1 2}}, \quad i \in \mathbb{Z}_{N_x}. \label{Zc}	
	\end{align}
\end{subequations}
  Here $h_i^x=x_{i+\frac 12}-x_{i-\frac 12}$.  Note that when $k=1$,  $P^{(x)}v$ only needs to satisfy the conditions \eqref{Zb}-\eqref{Zc}. 
     Similarly, we can define $P^{(y)}v$ along the $y$-direction.   Define 
     $\Pi_h v \in V_h^k$ of $v$  by 
\begin{equation}\label{projection_glo}
	\Pi _h v = P^{(x)}\otimes P^{(y)} v. 
\end{equation}
It is shown in \cite{Liu-2015}
 that the global projection $\Pi_h$ is uniquely defined,
 and 
 \begin{equation}\label{optimalerror}
		\|v-P^{(x)} v\|_0+ \|v-P^{(y)} v\|_0+\|v-\Pi_h v\|_0  \lesssim h^{l+1}\|v\|_{l+1}, \quad 0\leq l \leq k. 
\end{equation}
Moreover,  a straightforward analysis using the approximation in \eqref{optimalerror} yields  
\begin{equation*}
	a(u-\Pi_h u,v_h) \lesssim h^{k+1}\|v_h\|_0,\quad \forall v_h \in V_h^k. 
\end{equation*}
  In other words, if we choose $u_I=\Pi_h u$, then optimal error estimates can be obtained, just as proved in  \cite{Liu-2015}. 
  Based on this optimal error estimates,  we construct a special correction function $\omega$ to correct the term 
  such that 
\begin{equation*}
	a(u-u_I, v_h)=a(u-\Pi_h u+\omega,v_h) \lesssim h^{k+1+l}\|v_h\|_0,\quad \forall v_h \in V_h^k 
\end{equation*}
  for some positive $l$.  In light of  \eqref{projection_glo},  we have 
\begin{equation}\label{sub_projection}
	u-\Pi_h u=E^xu+E^yu-E^xE^yu,
\end{equation}
where
\begin{equation}\label{subsub_projection}
\begin{split}
	E^xu=&~u-P^{(x)}u,\quad E^yu=u-P^{(y)}u,\\
	E^xE^yu&
	=P^{(y)}(P^{(x)}u-u)-(P^{(x)}u-u).
\end{split}
\end{equation}
Then
\begin{equation*}
	a(u-\Pi_h u,v)=a(E^xu,v) + a(E^yu,v) - a(E^xE^yu,v).
\end{equation*}
 In the following, we will construct fucntions $\omega$ and $\bar \omega$ to separately correct the error $a(E^xu,v)$ and $ a(E^yu,v)$ such that 
 $\omega=\omega+\bar \omega$,  and $a(E^xu+\omega,v), a(E^yu+\bar \omega,v)$ are both of high orders.

\subsection{Correction function for $a(E^xu,v)$}

We begin with some preliminaries. 
First,  
let  
\[
	D^{-1}_xv(s) = \int ^x_{x_{i-\frac 1 2}}v(s')\mathrm ds',~ x \in \tau_i^x. 
\]
It is obviously that 
\begin{equation}\label{D_norm}
	\|D^{-1}_xv\|_{0,\infty,\tau_i^x} \leq h_i^x\|v\|_{0,\infty,\tau_i^x} .
\end{equation}
Secondly, we denote by $Q_h^xv$  the Gauss-Lobatto   projection of any function of $v$  along the $x$-direction.  That is, $Q_h^xv|_{\tau _i^x} \in \mathbb{P} ^k(x) $ is a function satisfying 
\begin{subequations}\label{Q}
	\begin{align}
		&Q_h^xv(x^+_{i-\frac 1 2})=v(x^+_{i-\frac 1 2}),\quad Q_h^xv(x^-_{i+\frac 1 2})=v(x^-_{i+\frac 1 2}), \label{Qa}\\
		&\int_{x^+_{i-\frac 1 2}}^{x^-_{i+\frac 1 2}}(v-Q_h^xv) \phi \mathrm dx,\quad \forall \phi \in \mathbb{P} ^{k-2}(\tau _i^x),k \geq 2. \label{Qb}	
	\end{align}
\end{subequations}
 Theres hold the following  error estimates (see, e.g.,  \cite{Cao;zhang;zou:2kFVM,v-Iv}) 
\begin{equation}\label{Q_norm}
	\|v-Q_h^xv\|_{r,q} \lesssim h^{m-r+\frac{2}{q}} \|\partial_x^{m+1}v\|_{0}, ~ 0 \leq r \leq m\le k .
\end{equation}
 Similarly, we can define $Q^y_h$ along the $y$-direction, and the above approximation property also holds true for $Q_h^y$.

Thirdly,  recalling the definition of  $A(\cdot,\cdot)$ in \eqref{DDGsubterm} and using the integration by parts and the equation
\begin{equation}\label{sideofibp}
	[uv]=[u]\{v\} + \{u\}[v], 
	\end{equation} 
 we  get 
\begin{align}\label{Aetasimply}
		A(E^xu,\xi)=&-\displaystyle\sum_{\tau_{i,j}}( E^xu,  \xi _{xx})_{\tau_{i,j}}+\displaystyle\sum_{\tau_{i,j}}\int_{\tau_j^y} (\widehat{\partial_x (E^xu)}[\xi]-\{E^xu\}[\xi_x])|_{x_{i+\frac 1 2}}\mathrm  dy  \notag\\
		&+ \displaystyle\sum_{\tau_{i,j}} (\partial_y(E^xu), \xi_{y})_{\tau_{i,j}}  + \displaystyle\sum_{\tau _{i,j}}\int_{\tau _i^x} (\widehat{\partial_y(E^xu)}[\xi] + [E^xu]\{\xi_y\})|_{y_{j+\frac 1 2}} \mathrm dx.
  \end{align}
%   &\displaystyle\sum_{\tau_{i,j}} \iint _{\tau_{i,j}} \partial_x (E^xu) \xi_x\mathrm dx \mathrm dy+\displaystyle\sum_{\tau_{i,j}}\int_{\tau_j^y} \widehat{\partial_x (E^xu)}[\xi]|_{x_{i+\frac 1 2}}+[E^xu]\{\xi_x\}|_{x_{i+\frac 1 2}}\mathrm dy \notag\\
% 		&+\displaystyle\sum_{\tau_{i,j}} \iint _{\tau_{i,j}} \partial_y (E^xu) \xi_y\mathrm dx\mathrm dy+\displaystyle\sum_{\tau_{i,j}}\int_{\tau _i^x} \widehat{\partial_y (E^xu)}[\xi]|_{y_{j+\frac 1 2}}+[E^xu]\{\xi_y\}|_{y_{j+\frac 1 2}}\mathrm dx  \notag\\
% 		=
% \\
% 		&-\displaystyle\sum_{\tau_{i,j}} \iint _{\tau_{i,j}} E^xu  \xi _{yy}\mathrm dx\mathrm dy+\displaystyle\sum_{\tau_{i,j}}\int_{\tau _i^x} \widehat{\partial_y (E^xu)}[\xi]|_{y_{j+\frac 1 2}}-\{E^xu\}[\xi_y]|_{y_{j+\frac 1 2}}\mathrm dx
Using the definition of $P^{(x)} u$ in \eqref{Z} and the integration again yields  
\begin{equation*}
	\begin{split}
		A(E^xu,\xi) 
		= & -\displaystyle\sum_{\tau_{i,j}}  (\partial_{y}^2(E^xu), \xi)_{\tau_{i,j}} - \displaystyle\sum_{\tau _{i,j}}\int_{\tau _i^x} [\partial_y(E^xu)\xi]|_{y_{j+\frac 1 2}} \mathrm dx + \displaystyle\sum_{\tau _{i,j}}\int_{\tau _i^x}(\widehat{\partial_y(E^x u)})[\xi]|_{y_{j+\frac 1 2}}\mathrm dx\\
		= & -\displaystyle\sum_{\tau_{i,j}}  (\partial_y^2(E^xu), \xi)_{\tau_{i,j}},
	\end{split}
\end{equation*}
where in  the second step we have used the fact that $\partial_y^r E^xu, r\geq 0$ are continuous  about $y$. 

Similarly,  we recall the definition of $a(\cdot,\cdot)$ in \eqref{a0}
\begin{eqnarray}\nonumber
		a(E^xu,v)
		&= &\left( (\partial_t-\partial_y^2)E^x u,v \right)+(E^xu \nabla {\bf f}(u) ,\nabla v)+\sum_{e\in{\cal E}_h} (\int_e [v]  \{E^xu\}{\bf f}'(u) \cdot {\bf n} \mathrm ds  + \int_e [v][E^xu]  {\bf \alpha } \cdot  {\bf n} \mathrm ds)\\\label{aExusimplify}
	&=&\left( (\partial_t-\partial_y^2)E^x u,v \right) - (f'_1(u)E^xu,v_x) + ((f'_2(u)E^xu)_y,v )
		 - \displaystyle\sum_{\tau_{i,j}} \int_{\tau_j^y} \alpha_1[E^xu][v]|_{x_{i+\frac 1 2}} \mathrm dy. 
\end{eqnarray}
Here $\alpha, \alpha_1$ are given in \eqref{alpha}, and in the last step, we have used integration by parts and the identites $\{E^xu\}(x_{i+\frac 12},\cdot)=0, [E^xu](\cdot,y_{j+\frac 12})=0$.

Now we are ready to construct the correction function corresponding to the term $a(E^xu, v)$.  
 Let  ${\omega}_0 = E^xu $ and we   define  $\omega_l|_{\tau_i^x} \in\mathbb P_{k}(x),  1\le l\le k-1$ by 
 \begin{subequations}\label{w1}
	\begin{align}
		&\langle {\omega_l}, \partial ^2_x v\rangle_{\tau _i^x}  = \mathcal{B} ({\omega_{l-1}},v)_{\tau _i^x},  \quad \forall v \in \mathbb{P}^k(\tau _i^x) \backslash  \mathbb{P}^1(\tau _i^x),~ i \in \mathbb{Z}_{N_x}, \label{w1a}\\
		& \widehat{({\omega_l})_x}|_{x_{i+\frac 1 2}} :=\beta_0(h)^{-1}[{\omega_l}] + \{\partial_x({\omega_l})\} + \beta_1h[\partial _x^2({\omega_l})]|_{x_{i+\frac 1 2}}= \alpha_1[ {\omega_{l-1}}]|_{x_{i+\frac 1 2}}, \label{w1b}\\
		&\{{\omega_l}\}|_{x_{i+\frac 1 2}}=0,\quad i \in \mathbb{Z}_{N_x},\label{w1c}
	\end{align}
\end{subequations}
where  $\alpha_1$ is defined in \eqref{alpha},  $\langle w,v\rangle_{\tau_i^x} =\int_{\tau_i^x}( uv)(x,\cdot) dx$,  and
\begin{equation*}
	\begin{split}
			\mathcal{B} (\omega ,v)_{\tau _i^x}=\langle (\partial_t-\partial_y^2)\omega ,v \rangle_{\tau _i^x} -\langle f'_1(u)\omega, \partial_xv\rangle_{\tau _i^x} + \langle (f'_2(u)\omega)_y,v\rangle_{\tau _i^x}.
	\end{split}
\end{equation*}

The properties of the proposed correction function $\omega_l,~1 \leq l \leq k-1$ in the following theorem are essential to the proof of superconvergence; see Lemma 3.1 below.

\begin{lemma}[]\label{correction_1 error}
	The function ${\omega_l},~ 1 \leq l \leq k-1$ defined in \eqref{w1} is uniquely determined. Moreover, if $u \in H^{k+l+2}(\Omega)$  and  ${\bf f}(u)\in C^2(\Omega)$, then 
	\begin{equation}\label{w1error}
		\|\partial_t^r  {\omega_l}\| \lesssim h^{k+l+1}\|u\|_{k+l+2r}, \quad \|\partial_y^r {\omega_l}\| \lesssim h^{k+l+1}\|u\|_{k+l+r},~~\forall r\ge 0.
	\end{equation}
\end{lemma}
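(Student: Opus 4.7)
The plan is to argue by induction on $l$, after first establishing unique solvability of \eqref{w1}. For the latter, observe that on each interval $\tau_i^x$ the defining conditions have exactly the same algebraic structure as \eqref{Za}--\eqref{Zc} for the projection $P^{(x)}$: equation \eqref{w1a} supplies $k-1$ scalar conditions (pick any basis of $\mathbb{P}^k(\tau_i^x)\setminus \mathbb{P}^1(\tau_i^x)$), and the interface conditions \eqref{w1b}--\eqref{w1c} supply two more, matching the $k+1$ unknowns of $\mathbb{P}^k(\tau_i^x)$. Since the associated homogeneous problem coincides with the one defining $P^{(x)} 0=0$, which has only the trivial solution by \cite{Liu-2015}, the linear system \eqref{w1} is uniquely solvable.

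The technical heart is a uniform stability estimate obtained by scaling to a reference interval. I would show that any $\omega\in\mathbb{P}^k(\tau_i^x)$ satisfying the structure of \eqref{w1} with data $(\mathcal{B}(\omega_{l-1},\cdot)_{\tau_i^x},\,\alpha_1[\omega_{l-1}]|_{x_{i+\frac12}})$ obeys
\[
 \|\omega\|_{0,\tau_i^x} \lesssim h\sup_{0\ne v\in\mathbb{P}^k(\tau_i^x)\setminus \mathbb{P}^1(\tau_i^x)}\frac{|\mathcal{B}(\omega_{l-1},v)_{\tau_i^x}|}{\|v\|_{0,\tau_i^x}} + h^{1/2}\bigl|\alpha_1[\omega_{l-1}]|_{x_{i+\frac12}}\bigr|,
\]
as a consequence of the equivalence of norms on the finite-dimensional polynomial space and the well-posedness already verified. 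Expanding $\mathcal{B}$ via the definition, applying Cauchy--Schwarz, and invoking a standard trace-inverse inequality on each $\tau_i^x$, one then obtains
\[
 \|\omega_l\|_0 \lesssim h\bigl(\|\omega_{l-1}\|_0+\|\partial_t\omega_{l-1}\|_0+\|\partial_y\omega_{l-1}\|_0+\|\partial_y^2\omega_{l-1}\|_0\bigr).
\]
The base case $l=1$ then follows from $\omega_0=E^xu$ combined with \eqref{optimalerror}: because $P^{(x)}$ commutes with $\partial_t$ and $\partial_y$, the mixed derivatives $\partial_t^r\partial_y^s E^xu$ enjoy the same optimal bound $h^{k+1}\|u\|_{k+1+s+2r}$.

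For the higher-derivative estimates in the inductive step, I would differentiate \eqref{w1} in $t$ and $y$; thanks to the smoothness of $f_1'(u),f_2'(u)$, the derivatives $\partial_t^r\partial_y^s\omega_l$ satisfy a system of the same form, but with a right-hand side involving products of derivatives of $u$ and of $\omega_{l-1}$. The induction then closes by applying the stability estimate again, after replacing each $\partial_t u$ by $\Delta u-\nabla\cdot {\bf f}(u)$ via the equation \eqref{2dCD}; trading one temporal derivative for two spatial derivatives in this way is exactly what produces the factor $\|u\|_{k+l+2r}$ in the $\partial_t^r$ bound, versus $\|u\|_{k+l+r}$ in the pure $\partial_y^r$ bound.

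The main obstacle I anticipate is the bookkeeping for the nonlinear chain rule: each application of $\partial_t^r$ to $\mathcal{B}(\omega_{l-1},v)$ produces a cascade of terms of the form $f^{(j)}(u)(\partial_t^{i_1}u)\cdots (\partial_t^{i_m}u)\,\partial_t^{r-|i|}\partial_y^s\omega_{l-1}$, and one has to verify that every such term is controlled by the stated bound. A secondary technicality is to check that the interface jump $[\omega_{l-1}]|_{x_{i+\frac12}}$ appearing in \eqref{w1b} indeed decays like $h^{k+l+\frac12}$ in $\ell^2(i)$; this should follow inductively from the $L^2$ bound on $\omega_{l-1}$ by the usual trace-inverse estimate, but the dependence on the jumps must be tracked through every step of the recursion, since this is the only place where the specific form of the numerical flux $\widehat{\partial_x(\omega_l)}$ enters the estimates.
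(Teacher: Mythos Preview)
Your uniqueness argument is fine, but the stability step has a real gap. The interface conditions \eqref{w1b}--\eqref{w1c} at $x_{i+\frac12}$ involve the traces of $\omega_l$ from \emph{both} $\tau_i^x$ and $\tau_{i+1}^x$, so the system is globally coupled across all cells (and, with periodic boundary conditions, is genuinely cyclic). Consequently $\omega_l|_{\tau_i^x}$ is not determined by data local to $\tau_i^x$, and the estimate
\[
 \|\omega\|_{0,\tau_i^x} \lesssim h\sup_{v}\frac{|\mathcal{B}(\omega_{l-1},v)_{\tau_i^x}|}{\|v\|_{0,\tau_i^x}} + h^{1/2}\bigl|\alpha_1[\omega_{l-1}]|_{x_{i+\frac12}}\bigr|
\]
cannot be obtained by ``scaling to a reference interval'' and equivalence of norms: the relevant linear map acts on a space of dimension $(k+1)N_x$, and equivalence of norms on a finite-dimensional space gives no control on the constant as $N_x\to\infty$.

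The paper confronts exactly this point. It expands $\omega_l=\sum_m c_{i,m}^l L_{i,m}$, observes that \eqref{w1a} determines $c_{i,m}^l$ for $m\le k-2$ \emph{locally}, and then assembles the remaining two coefficients per cell into a global vector $\mathbf c$ satisfying $M\mathbf c=\mathbf b$, where $M$ is an $N_x\times N_x$ block \emph{circulant} matrix with fixed $2\times2$ blocks $A,B$ depending only on $k,\beta_0,\beta_1$. The circulant structure is what yields $\|M^{-1}\|\lesssim 1$ uniformly in $N_x$ (the eigenvalues are $A+e^{2\pi i j/N_x}B$), and this is the mechanism that turns well-posedness into a mesh-independent stability bound. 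Your proposal needs either this argument or an equivalent one; without it the recursion does not close with constants independent of $h$. The remaining ingredients you outline---differentiating the defining system in $t$ and $y$, trading $\partial_t$ for two spatial derivatives via \eqref{2dCD}, and the inductive bookkeeping---match the paper's proof once the uniform stability is in hand.
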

  \begin{proof}   Since $\omega_l|_{\tau_i^x}\in\mathbb P_{k}$, we suppose for any $y\in [0,2\pi]$ that 
	\begin{equation*}
		\omega_l(x,y):=\displaystyle\sum_{m=0}^k c^l_{i,m}(y,t)L_{i,m}(x),\ \ \forall x\in\tau_i^x,
	\end{equation*}
where $L_{i,m}(x)$ is the Legendre polynomial of degree $m$ in interval $\tau _i^x$.
For any $v \in \mathbb{P} ^k \backslash \mathbb{P} ^1 (\tau_i^x)$,  noticing that $\partial_x^2 v \in \mathbb{P} ^{k-2}(\tau_i^x)$, then we choose $\partial_x^2 v = L_{i,m},~m \in \mathbb{Z}_{k-2} $ in \eqref{w1a} to get
\begin{equation}\label{cijm}
			c^l_{i,m} = \frac{2m+1}{2 h_i^x} \left(\langle (\partial_t-\partial_y^2)\omega_{l-1}+(f'_2(u)\omega_{l-1})_y ,D_x^{-1}D_x^{-1}L_{i,m} \rangle_{\tau _i^x} -\langle f'_1(u)\omega_{l-1}, D_x^{-1}L_{i,m}\rangle_{\tau _i^x}\right).
\end{equation}
  It remains to determine $c^l_{i,k-1}, c^l_{i,k}$. In light of the conditions  in \eqref{w1b}-\eqref{w1c},  we get 
\begin{equation*}
	\begin{split}
		&\displaystyle\sum_{m=k-1}^k(L_{m}(1)c^l_{i,m}+L_{m}(-1)c^l_{i+1,m})=b_1^{i},\ \ b_1^{i}= -\displaystyle\sum_{m=0}^{k-2}(L_{m}(1)c^l_{i,m}+L_{m}(-1)c^l_{i+1,m}),\\
		&\displaystyle\sum_{m=k-1}^k(g_0(m)c^l_{i,m}+g_1(m)c^l_{i+1,m})=b_2^{i},\  b_2^{i}= h\alpha_1[\widetilde{\omega_1^{l-1}}]|_{x_{i+\frac 1 2}}-\displaystyle\sum_{m=0}^{k-2}(g_0(m)c^l_{i,m}+g_1(m)c^l_{i+1,m}),
	\end{split}
\end{equation*}
where
\[
   g_0(m) = -\beta_0 L_{m}(1) + L'_{m}(1) - 4\beta_1 L''_{m}(1),\  g_1(m) = \beta_0 L_{m}(-1) + L'_{m}(-1) + 4\beta_1 L''_{m}(-1).
\] 
Set an $N_x \times N_x$ block circulant matrix called M, with the first row $[A~B~O \cdots O]$ and the last row $[B~O \cdots O~A]$, where O is an $2\times2$ zero matrix, and 
\begin{align*}
	A=\left(
	\begin{matrix}
	L_{k-1}(1) &L_{k}(1)\\
	g_0(k-1) &g_0(k)\\
	\end{matrix}\right),
	B=\left(
	\begin{matrix}
	L_{k-1}(-1) &L_{k}(-1)\\
	g_1(k-1) &g_1(k)\\
	\end{matrix}\right).
\end{align*}
Then the  coefficients ${\bf c}=(c_1,\cdots,c_{N_x})^T$ with $c_i=(c_{i,k-1}^l, c_{i,k}^l)$ 
  satisfy the linear system 
\begin{equation}\label{mc}
		M{\bf c}={\bf b}, \  {\bf b}=(b_1,\cdots,b_{N_x})^T,\ b_i=(b_1^i,b_2^i). 
\end{equation}
It is proved in \cite{Liu-2015} that $M$ is non-singular when $\beta_0\geq \Gamma (\beta_1)$. Therefore, $c^l_{i,k-1},c_{i,k}^l$ are uniquely determined and thus 
 the uniqueness of  $\omega_l$ follows.

  We next estimate the coefficients $c^l_{i,m}$. 
 By a direct calculation  from \eqref{cijm} and  \eqref{D_norm},     we get 
  \[
    |c^l_{i,m}| \lesssim h ( \| (\partial_t-\partial_y^2)\omega_{l-1}\|_{0,1,\tau_i^x}+ \| \partial_y\omega_{l-1}\|_{0,1,\tau_i^x})+ \|\omega_{l-1} \|_{0,1,\tau_i^x},\ m\le k-2. 
 \]
   Consequently,  for all $\tau_{i,j}=\tau_i^x\times\tau_j^y$, 
 \begin{equation}\label{eq:2}
    \|c^l_{i,m}\|_{0,\tau_j^y}\lesssim  h^{\frac 32}  ( \| (\partial_t-\partial_y^2)\omega_{l-1}\|_{0,\tau_{i,j}}+ \| \partial_y\omega_{l-1}\|_{0,\tau_{i,j}})+ h^{\frac 12} \|\omega_{l-1} \|_{0,\tau_{i,j}},\ \ m\le k-2. 
 \end{equation}
 On the other hand,  since M is a circulant matrix,  we have from  \eqref{mc} that 
\[
    \|{\bf c}\|^2_0:=\sum_{i=1}^{N_x} \left(|c_{i,k-1}^l|^2+ |c_{i,k}^l|^2\right)\lesssim  \|M^{-1}\|^2_0\|{\bf b}\|^2_0\lesssim \sum_{i=1}^{N_x} \left(|b_{1}^i| ^2+ |b_{2}^i|^2\right), 
\]
 which yields 
\[
   \sum_{i=1}^{N_x} \left(\|c_{i,k-1}^l\|_{0,\tau_j^y}^2+ \|c_{i,k}^l\|_{0,\tau_j^y}^2\right)\lesssim \sum_{i=1}^{N_x} \left(\|b_{1}^i\|_{0,\tau_j^y}^2+ \|b_{2}^i\|_{0,\tau_j^y}^2\right). 
\] 
  Note that 
 \begin{eqnarray*}
	 \sum_{i=1}^{N_x}(\|b_1^{i}\|_{0,\tau_j^y}^2+\|b_2^{i}\|_{0,\tau_j^y}^2) & \lesssim & \sum_{i=1}^{N_x} \sum_{m=0}^{k-2} \|c^l_{i,m}\|^2_{0,\tau_j^y}
		+ h^2\langle[{\omega_{l-1}}]|_{x_{i+\frac 1 2}}, [{\omega_{l-1}}]|_{x_{i+\frac 1 2}}\rangle_{0,\tau_j^y}\\
		&\lesssim & \sum_{i=1}^{N_x} \sum_{m=0}^{k-2} \|c^l_{i,m}\|^2_{0,\tau_j^y}
		+ h\|\omega_{l-1}\|_{0,\tau_{i,j}}^2. 
\end{eqnarray*}
 Then 
 \[
   \sum_{i=1}^{N_x} \left(\|c_{i,k-1}^l\|_{0,\tau_j^y}^2+ \|c_{i,k}^l\|_{0,\tau_j^y}^2\right)\lesssim  \sum_{i=1}^{N_x} h\|\omega_{l-1}\|_{0,\tau_{i,j}}^2, 
\] 
 which yields, together with \eqref{eq:2} that 
  \[
  \sum_{i=1}^{N_x} \sum_{m=0}^{k} \|c^l_{i,m}\|^2_{0,\tau_j^y} \lesssim 
		h\sum_{i=1}^{N_x}  (\|\omega_{l-1}\|_{0,\tau_{i,j}}^2+ h^2  \| (\partial_t-\partial_y^2)\omega_{l-1}\|^2_{0,\tau_{i,j}}+ h^2 \| \partial_y\omega_{l-1}\|^2_{0,\tau_{i,j}})
\] 
  Consequently, 
\[
   \|\omega_l\|_0^2\lesssim \sum_{i=1}^{N_x}\sum_{j=1}^{N_y} h_i^x\sum_{m=0}^{k} \|c^l_{i,m}\|^2_{0,\tau_j^y} \lesssim h^2 (\|\omega_{l-1}\|_{0}^2+ h^2 \| (\partial_t-\partial_y^2)\omega_{l-1}\|^2_{0}+ h^2 \| \partial_y\omega_{l-1}\|^2_{0}). 
\]
Taking time derivative or  $y$-direction derivative on both side of  the above equation and following the same argument, we have for all $m, r\ge 0$
  \begin{equation*}
\|(\partial_t^r+\partial_y^m)\omega_l\|_0\lesssim	h (\|(\partial_t^r+\partial_y^m)\omega_{l-1}\|_{0}+ h^2\| (\partial_t^r+\partial_y^m)(\partial_t-\partial_y^2)\omega_{l-1}\|_{0}+h^2\|\partial_y(\partial_t^r+\partial_y^m)\omega_{l-1}\|_0).
\end{equation*} 
  % By recursion, there holds 
%\begin{eqnarray*}
%    \|\omega_l\|_0 &\lesssim & h (\|\omega_{l-1}\|_{0}+ h^2\| (\partial_t-\partial_y^2)\omega_{l-1}\|_{0}+h^2\|\partial_y\omega_{l-1}\|_0)\\
 %     &\lesssim & h^l (\|\omega_0\|_{0}+ h^{2l}\| (\partial_t-\partial_y^2)^l\omega_0\|_{0}+h^2\|\partial^l_y\omega_0\|_0). 
%\end{eqnarray*}
 By choosing $l = 1$ and using  the estimate of the projection $P^{(x)}$, we obtain
\begin{eqnarray*}
	&& \|\omega_1\|_0\lesssim h (\|E^xu\|_{0}+ h^2\| (\partial_t-\partial_y^2)E^xu\|_{0}+h^2\|\partial_yE^xu\|_0)\lesssim h^{k+2}\|u\|_{k+1},\\
	&& \|\partial_t\omega_1\|_0\lesssim h^{k+2}\|\partial_t u\|_{k+1}\lesssim h^{k+2}\|u\|_{k+3},\ \|\partial_y^r\omega_1\|_0\lesssim h^{k+2}\| u\|_{k+1+r}, \ r\le 2. 
\end{eqnarray*}
  Then  \eqref{w1error} follows from the method of recursion. 
\end{proof}

% Now we choose $\omega_1^p = \displaystyle\sum_{l=1}^p\omega_1 ^l =\displaystyle\sum_{l=1}^p  Q^y_h(\widetilde{\omega_1^l})$, $1\leq p\leq k-1$
% as the correction functions corresponding to the term $a(E^xu,v)$.
% Then we can get the following correction result.

\begin{theorem}[]\label{correction_1result}
 Suppose all the conditions of Lemma \ref{correction_1 error} hold. Let $u \in H^{k+p+2}(\Omega)$ be the solution of \eqref{2dCD}, $u_h$ be the numberical solution of \eqref{DDGloc}, and $\omega_l, l\le p\le k-1$   be defined in \eqref{w1}. Then  for all $v \in V_h^k$, 
\begin{equation}\label{correction_1eqresult}
	|a(E^xu + \sum_{l=1}^pQ_h^y\omega_l,v)|\leq Ch^{2(k+p+1)}\|u\|_{k+p+2}^2 + \frac \gamma 4 \|v\|_E^2+ \|v\|_0^2.
 \end{equation}
\end{theorem}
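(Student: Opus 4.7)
The plan is to exploit the telescoping structure built into the correction functions via the defining conditions \eqref{w1a}--\eqref{w1c}. First, I would split
\[
a(E^x u + \sum_{l=1}^p Q_h^y \omega_l, v) = a\bigl(E^x u + \sum_{l=1}^p \omega_l, v\bigr) + \sum_{l=1}^p a\bigl((Q_h^y - I)\omega_l, v\bigr).
\]
For the second sum, note that $Q_h^y$ preserves endpoint values in $y$ (by \eqref{Qa}) and commutes with the $x$-interface operations, so $(Q_h^y-I)\omega_l$ has no $y$-jumps and still satisfies $\{(Q_h^y-I)\omega_l\}|_{x_{i+1/2}} = 0$. Combining the standard continuity of $a(\cdot,v)$ with the approximation property \eqref{Q_norm} and the bound $\|\partial_y^{k+1}\omega_l\|_0 \lesssim h^{k+l+1}\|u\|_{2k+l+1}$ from Lemma \ref{correction_1 error}, each such contribution is of order $h^{2k+l+1}\|u\|_{2k+l+1}\|v\|_E$, comfortably below the target threshold.

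The heart of the proof is to telescope $a(E^x u + \sum_{l=1}^p \omega_l, v)$. Repeating the integration-by-parts that led to \eqref{aExusimplify} for $E^x u$, and using that $\{\omega_l\}|_{x_{i+\frac12}} = 0$ by \eqref{w1c} and that $\omega_l$ is smooth in $y$, one obtains
\[
a(\omega_l, v) = \sum_{i,j}\int_{\tau_j^y}\mathcal{B}(\omega_l, v)_{\tau_i^x}\,dy - \sum_{i,j}(\omega_l, v_{xx})_{\tau_{ij}} + \sum_{i,j}\int_{\tau_j^y}\bigl(\widehat{(\omega_l)_x} - \alpha_1[\omega_l]\bigr)[v]|_{x_{i+\frac12}}\,dy.
\]
Applying \eqref{w1a} to identify $\sum(\omega_l, v_{xx})$ with $\sum\int\mathcal{B}(\omega_{l-1}, v)\,dy$, and \eqref{w1b} to replace $\widehat{(\omega_l)_x}$ by $\alpha_1[\omega_{l-1}]$, a pairwise cancellation occurs when summing $a(E^x u, v) + \sum_{l=1}^p a(\omega_l, v)$: each $\sum\int\mathcal{B}(\omega_l, v)\,dy$ from level $l$ is wiped out by $-\sum(\omega_{l+1}, v_{xx})$ at level $l+1$, and the jump contributions $\alpha_1[\omega_l]$ telescope analogously. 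What remains is the residual at level $p$,
\[
\mathcal{R}_p(v) := \sum_{i,j}\int_{\tau_j^y}\mathcal{B}(\omega_p, v)_{\tau_i^x}\,dy - \sum_{i,j}\int_{\tau_j^y}\alpha_1[\omega_p][v]|_{x_{i+\frac12}}\,dy.
\]

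Finally, I would bound $\mathcal{R}_p(v)$ term-by-term with Cauchy--Schwarz and Young's inequality. The three pieces of $\mathcal{B}(\omega_p, v)$ pair the norms $\|(\partial_t-\partial_y^2)\omega_p\|_0$, $\|\omega_p\|_0$, $\|\partial_y\omega_p\|_0$ (controlled by Lemma \ref{correction_1 error} to be $\mathcal{O}(h^{k+p+1}\|u\|_{k+p+2})$) against $\|v\|_0$ or $\|v\|_E$; Young's inequality (with the small weight $\gamma/4$ for the $\|v\|_E$-couplings and unit weight for the $\|v\|_0$-couplings) delivers exactly $Ch^{2(k+p+1)}\|u\|_{k+p+2}^2 + \frac{\gamma}{4}\|v\|_E^2 + \|v\|_0^2$. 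The jump integral is handled via the trace bound $\|[v]\|_e^2 \le (h/\beta_0)\|v\|_E^2$ together with the inverse trace estimate $\|[\omega_p]\|_e \lesssim h^{-1/2}\|\omega_p\|_0$, yielding an $h^{k+p+1}\beta_0^{-1/2}\|u\|_{k+p+2}\|v\|_E$ term that is absorbed similarly.

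The main technical obstacle is the ``$\mathbb{P}^k \setminus \mathbb{P}^1$'' restriction in \eqref{w1a}: since $v|_{\tau_{ij}}(\cdot, y)\in\mathbb{P}^k(x)$ has arbitrary linear-in-$x$ modes, the identity $\sum(\omega_l, v_{xx}) = \sum\int\mathcal{B}(\omega_{l-1}, v)\,dy$ only holds modulo a residual $\mathcal{B}(\omega_{l-1}, v_L)$, where $v_L$ is the $\mathbb{P}^1$-component of $v$ (in the canonical representation underlying the explicit formula \eqref{cijm}). I plan to control this residual by expressing $v_L$ through the trace values $v(x_{i-\frac12},\cdot)$ and $\partial_x v(x_{i-\frac12},\cdot)$, then applying trace/inverse estimates to bound the resulting pairings against $\|\omega_{l-1}\|_0$, and finally absorbing the outcome into the $\|v\|_0^2$ term and the $Ch^{2(k+p+1)}\|u\|^2$ term using the high-order scaling of $\omega_{l-1}$ from Lemma \ref{correction_1 error}—since the accumulated residuals across $l$ are dominated by the largest $\omega_{l-1}$ only through products with jumps/traces of $v$, not with $\|v\|_0$ at zeroth order.
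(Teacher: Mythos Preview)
Your overall strategy matches the paper's: the same splitting into $a(E^xu+\sum_l\omega_l,v)$ plus the $(Q_h^y-I)\omega_l$ remainder, and the same telescoping mechanism driven by \eqref{w1a}--\eqref{w1c}. Two points deserve attention, one minor and one more serious.

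\emph{The $Q_h^y$--remainder.} Your plan to use the full approximation power of $Q_h^y$ (i.e.\ $m=k$ in \eqref{Q_norm}) forces you to invoke $\|\partial_y^{k+1}\omega_l\|_0\lesssim h^{k+l+1}\|u\|_{2k+l+1}$, which overshoots the available regularity $u\in H^{k+p+2}$. The paper avoids this by balancing the two factors: it expands $a(\omega_l-Q_h^y\omega_l,v)$ into its $\partial_t$, $A$, and $\mathcal F$ pieces, applies \eqref{Q_norm} with a \emph{variable} order $m$ chosen so that $l+m=p+2$, and then uses \eqref{w1error} at that $m$. This yields exactly $h^{2(k+p+1)}\|u\|_{k+p+2}^2$ for every $l$ and sums to the stated bound. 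Your argument is easily repaired along these lines, but as written it does not close under the hypotheses of the theorem.

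\emph{The $\mathbb P^1$--residual.} You are right that \eqref{w1a} is stated only for test functions in $\mathbb P^k\setminus\mathbb P^1$, and that a naive reading leaves a residual $\sum_l\mathcal B(\omega_{l-1},v_L)$ when telescoping against a general $v\in V_h^k$. The paper simply asserts the clean identity $a(E^xu+\sum_l\omega_l,v)=b(\omega_p,v)$ without isolating such a term. Your proposed remedy, however, does not go through: the linear part $v_L$ is built from one--sided traces $v(x_{i-1/2}^+,\cdot)$ and $\partial_xv(x_{i-1/2}^+,\cdot)$, which by the trace/inverse inequalities are controlled only by $h^{-1/2}\|v\|_{0,\tau}$, not by the jump quantities entering $\|v\|_E$. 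Hence the dominant residual (at $l=1$) pairs $\|E^xu\|_0\sim h^{k+1}$ against $\|v\|_0$ or $\|v_x\|_0$, and after Young's inequality this produces $Ch^{2(k+1)}\|u\|^2$ rather than the required $Ch^{2(k+p+1)}\|u\|^2$. So the mechanism you sketch in the final paragraph---``only through products with jumps/traces of $v$, not with $\|v\|_0$ at zeroth order''---is not correct as stated, and you should either justify the exact telescoping (as the paper implicitly does) or find a genuinely different way to control this residual.
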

\begin{proof}
	  First, we observe that 
\begin{equation}\label{aExuw1}
		 a(E^x u + \sum_{l=1}^pQ_h^y\omega_l,v) = a(E^x u + \sum_{l=1}^p\omega_l,v)-\displaystyle\sum_{l=1}^p a(\omega_l-Q_h^y{\omega_l},v).
\end{equation}
  We next estimate  the two terms appeared in the right side of the above equation. 
 
 	\emph{Step 1:} 
Let 
\[
   b(w,v)=\displaystyle\sum_{\tau\in{\cal T}_h} \left( (\partial_t-\partial_y^2) {w} ,v \right)_{\tau} - \left( f'_1(u)w, v_x \right)_{\tau}+
			 \left( (f'_2(u)w)_y,v \right) _{\tau} -\displaystyle\sum_{\tau_{i,j}} \int_{\tau_j^y} \alpha_1 [{w}][v]|_{x_{i+\frac 1 2}} \mathrm dy.
\]
Using  the Cauchy-Schwarz inequality and the inverse inequality yields 
\begin{eqnarray*}
   |b(w,v)| & \lesssim & C (\|\partial_t w\|^2_0+\sum_{r=0}^2\|\partial_y^rw\|^2_0+\|v\|_0^2+ h\sum_{\tau_{i,j}} \int_{\tau_j^y}  [{w}]^2|_{x_{i+\frac 1 2}} \mathrm dy)+\frac{\gamma}{8} \|v\|_E^2\\
      & \lesssim & C (\|\partial_t w\|^2_0+\sum_{r=0}^2\|\partial_y^rw\|^2_0+\|v\|_0^2)+\frac{\gamma}{8} \|v\|_{E}^2. 
\end{eqnarray*}
On the other hand,   by \eqref{w1} and  \eqref{aExusimplify}, we derive
\[
			|a(E^x u +\displaystyle\sum_{l=1}^p {\omega_l},v) |=|b(\omega_p,v)|\lesssim C (\|\partial_t \omega_p\|^2_0+\sum_{r=0}^2\|\partial_y^r\omega_p\|^2_0+\|v\|_0^2)+\frac{\gamma}{8} \|v\|_{E}^2. 
\]
 In light of  \eqref{w1error}, we obtain
\[
	|a(E^x u +\displaystyle\sum_{l=1}^p {\omega_l},v) |\le C h^{2(k+p+1)}\|u\|^2_{k+p+2} +C \|v\|_0^2+\frac{\gamma}{8} \|v\|_{E}^2. 
\]
 
\emph{Step 2:}  Recalling the definition of $a(\cdot,\cdot)$, we have 
\begin{equation}\label{eq:7}
  a(\omega_l- Q_h^y\omega_l,v)
		=  ( \partial_t (\omega_l-Q_h^y{\omega_l} ),v ) + A(\omega_l-Q_h^y{\omega_l},v) + \mathcal{F}(\omega_l-Q_h^y{\omega_l},v).
\end{equation}
  In light of \eqref{Q_norm} and \eqref{w1error}, we easily get for all $m\le k+1$ that 
 \begin{eqnarray}\label{eq:4}
    | ( \partial_t (\omega_l-Q_h^y{\omega_l} ),v ) |\lesssim h^{m}\|\partial_t\partial_y^m\omega_l\|_0\|v\|_0\lesssim h^{k+l+m+1}\|u\|_{k+l+m+2}\|v\|_0. 
 \end{eqnarray} 
To estimate the term $A(\omega_l- Q_h^y{\omega_l},v)$, we first note that $Q_h^y$
is the Lobatto projection along $y$-direction, which yields  
\[
    \{Q_h^y{\omega_l} \}(x_{i+\frac 1 2},y)=\{\omega_l\}(x_{i+\frac 1 2},y) ,\ \  \widehat{ (\omega_l )_x}(x_{i+\frac 1 2},y)= \widehat{(Q_h^y\omega_l)_x} (x_{i+\frac 1 2},y).
 \]
 For convenience, we adopt  the following notation
 \[
 	\|v\|_{E_x}^2 = (v_x,v_x)+ \sum_{\tau_{i,j}\in{\mathcal{T} }_h}\int_{\tau_j^y} \frac{\beta_0 }{h} [v]^2|_{x_{i+\frac 1 2}} \mathrm dy,~~
	 \|v\|_{E_y}^2 = (v_y,v_y)+ \sum_{\tau_{i,j}\in{\mathcal{T} }_h}\int_{\tau_i^x} \frac{\beta_0 }{h} [v]^2|_{y_{j+\frac 1 2}} \mathrm dx.
\]
which implies that $\|v\|_E^2 = \|v\|_{E_x}^2+\|v\|_{E_y}^2$.

   Then,  we  use  \eqref{DDGsubterm} and  the integration by parts to derive 
\begin{eqnarray*}
		A(\omega_l-Q_h^y{\omega_l},v) 
		=-(\omega_l-Q_h^y{\omega_l},v_{xx})+ (\partial_y(\omega_l-Q_h^y{\omega_l}),v_{y})+\sum_{\tau_{i,j}}\int_{\tau _i^x}\widehat{ \partial_y(\omega_l-Q_h^y{\omega_l})}[v]|_{y_{j+\frac 1 2}} \mathrm dx.
		%\le C(h^{-4} \|\omega_l-Q_h^y{\omega_l}\|_0^2+\|v\|_0^2+h\sum_{\tau_{i,j}}\int_{\tau _i^x}\widehat{ \partial_y(\omega_l-Q_h^y{\omega_l})}^2|_{y_{j+\frac 1 2}} \mathrm dx )+\frac{\gamma}{4}\|v\|_{E}^2
\end{eqnarray*}
Then, a direct calculation from the Cauchy-Schwarz  and   the  inverse inequalities yields 
\begin{eqnarray}\nonumber
  | A(\omega_l-Q_h^y{\omega_l},v) |&\le &  C(h^{-4} \|\omega_l-Q_h^y{\omega_l}\|_0^2+\|v\|_0^2+h\sum\limits_{\tau_{i,j}}\int_{\tau _i^x}\widehat{ \partial_y(\omega_l-Q_h^y{\omega_l})}^2|_{y_{j+\frac 1 2}} \mathrm dx )+\frac{ \gamma\epsilon_l}{8}\|v\|_{E_y}^2\\\nonumber
  &\le& C(h^{-4} \|\omega_l-Q_h^y{\omega_l}\|_0^2+\|v\|_0^2+\sum\limits_{r=0}^2 h^{2r-1} \|\partial_y^r(\omega_l-Q_h^y{\omega_l})\|_{0,\infty}^2)+\frac{\gamma\epsilon_l}{8}\|v\|_{E_y}^2\\\label{eq:5}
  &\le& C h^{2(k+l+m-1)} \|u\|^2_{k+l+m} +\frac{\gamma\epsilon_l}{8}\|v\|_{E_y}^2+C\|v\|_0^2,\ \ m\le k. 
\end{eqnarray}
Here in the last step, we have used the estimates \eqref{w1error} and \eqref{Q_norm} with $Q_h^x$ replaced by $Q_h^y$. 
Similarly,  as for the last term $\mathcal{F}(\omega_l-Q_h^y{\omega_l},v)$, we use the properties of $Q_h^y$  to obtain 
\begin{eqnarray*}
			\mathcal{F}(\omega_l-Q_h^y{\omega_l},v) = & - \left(  f'_1(u)(\omega_l-Q_h^y{\omega_l}),v_x \right) + \left( (f'_2(u)(\omega_l-Q_h^y{\omega_l})_y,v\right)  \\
			&-\displaystyle\sum_{\tau_{i,j}} \int_{\tau_j^y} \alpha_1 
			[\omega_l-Q_h^y{\omega_l}][v]|_{x_{i+\frac 1 2}} \mathrm dy.
\end{eqnarray*}
  Again we use \eqref{w1error},  the Cauchy-Schwarz and the inverse inequalites  and then derive 
 \begin{eqnarray}\nonumber
    |\mathcal{F}(\omega_l-Q_h^y{\omega_l},v) | & \lesssim & \|\omega_l-Q_h^y{\omega_l}\|_0 \|v_x\|_{E_x}+  \|\partial_y(\omega_l-Q_h^y{\omega_l})\|_0 \|v\|_0\\\label{eq:6}
    &\le& C h^{2(k+l+m-1)} \|u\|^2_{k+l+m} +\frac{\gamma\epsilon_l}{8}\|v\|_{E_x}^2+C\|v\|_0^2,\ \ m\le k. 
 \end{eqnarray}
 Substituting \eqref{eq:4}-\eqref{eq:6}  into \eqref{eq:7} and taking $l+m=p+2$ yields 
\[
    | a(\omega_l- Q_h^y\omega_l,v) |\le C h^{2(k+p+1)} \|u\|^2_{k+p+2} +\frac{\gamma\epsilon_l}{8}\|v\|_{E}^2+C\|v\|_0^2.
\] 
 Choosing $\displaystyle{\sum_{l=1}^p}\epsilon_l=1$, there holds
\[
     \sum_{l=1}^p| a(\omega_l- Q_h^y\omega_l,v) | \le  C h^{2(k+p+1)} \|u\|^2_{k+p+2} +\frac{\gamma}{8}\|v\|_{E}^2+C\|v\|_0^2. 
 \]  
Then \eqref{correction_1eqresult} follows from \eqref{aExuw1}.

\end{proof}

\subsection{Correction function for $a(E^yu,v)$}

By the same argument as what we did for $a(E^xu,v)$ in \eqref{aExusimplify},  we can define the correction function $\bar w_l, l\le k-1$ for the term $a(E^xu,v)$. 
Denote $\bar w_0=E^yu$ and  $\bar w_l|_{\tau_j^y}\in\mathbb P_{k}(y)$
 is defined as 
\begin{subequations}\label{w2}
	\begin{align}
		&(\bar{\omega_{l}}, \partial ^2_y v)_{\tau_j^y}  = \mathcal{\bar B} (\bar{\omega}_{l-1},v)_{\tau_j^y},  \quad \forall v \in \mathbb{P}^k(\tau_j^y) \backslash  \mathbb{P}^1(\tau_j^y), j \in \mathbb{Z}_{N_y}, \label{w2a}\\
		&\widehat{(\bar{\omega_l})_y}|_{y_{j+\frac 1 2}}:=\beta_0(h)^{-1}[{\bar\omega_l}] + \{\partial_y\bar{\omega_l}\} + \beta_1h[\partial _y^2\bar{\omega_l}]|_{y_{j+\frac 1 2}} = \alpha_2[\bar{\omega_{l-1}}]|_{y_{j+\frac 1 2}}, \label{w2b}\\
		&\{\bar{\omega_l}\}|_{y_{j+\frac 1 2}}=0,\label{w2c}
	\end{align}
\end{subequations}
where  $\alpha_2$ is given in \eqref{alpha} and
\begin{equation*}
			\mathcal{\bar B} (w,v)_{\tau_j^y}=\langle (\partial_t-\partial_x^2)w,v \rangle_{\tau_j^y} -\langle f'_2(u)w, \partial_yv\rangle_{\tau_j^y} + \langle \partial_x(f'_1(u)w),v\rangle_{\tau_j^y}.
\end{equation*}

\begin{theorem}[]\label{correction_2result}
	 Suppose all the conditions of Lemma \ref{correction_1 error} hold. Let $u \in H^{k+p+2}(\Omega)$ be the solution of \eqref{2dCD}, $u_h$ be the numberical solution of \eqref{DDGloc}, and $\bar \omega_l, l\le p\le k-1$   be defined in \eqref{w1}. Then  for all $l\le p\le k-1$, 
	\begin{equation}\label{w2error}
		\|\partial_t^r  {\omega_l}\| \lesssim h^{k+l+1}\|u\|_{k+l+2}, \quad \|\partial_y^r {\omega_l}\| \lesssim h^{k+l+1}\|u\|_{k+l+r},~~\forall r\ge 0.
	\end{equation}
	Moreover, there holds for all $v \in V_h^k$
	\begin{equation}\label{correction_2eqresult}
		| a(E^yu +\sum_{l=1}^p \bar\omega_l,v)|\leq Ch^{2(k+p+1)}\|u\|_{k+p+2}^2 + \frac \gamma 4 \|v\|_E^2+ \|v\|_0^2. 
	\end{equation}
\end{theorem}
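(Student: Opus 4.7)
The strategy is to mirror the proof of Theorem \ref{correction_1result} (together with Lemma \ref{correction_1 error}) verbatim, interchanging the roles of $x$ and $y$. The definition \eqref{w2} is literally \eqref{w1} under the substitution $x \leftrightarrow y$, and the DDG bilinear form $A(\cdot,\cdot)$, the flux \eqref{dif-flux}, and the tensor-product projection $\Pi_h$ are all symmetric in the two coordinates, so every argument carries over with no new ingredients.

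\emph{Step 1 (the bound \eqref{w2error}).} Following Lemma \ref{correction_1 error}, I expand $\bar{\omega}_l\big|_{\tau_j^y}$ in Legendre polynomials along $y$, determine the low-order coefficients $c^l_{j,m}$ for $m \le k-2$ by testing \eqref{w2a} against $L_{j,m}$, and fix the two top-order coefficients $c^l_{j,k-1}, c^l_{j,k}$ from \eqref{w2b}--\eqref{w2c}. This produces a block circulant linear system of exactly the same shape as \eqref{mc}; its nonsingularity under $\beta_0 \ge \Gamma(\beta_1)$ was already established in \cite{Liu-2015}. Estimating $\mathbf{c}$ in terms of the right-hand side yields the recursion
\[
\|(\partial_t^r+\partial_x^m)\bar{\omega}_l\|_0 \lesssim h\bigl(\|(\partial_t^r+\partial_x^m)\bar{\omega}_{l-1}\|_0 + h^2\|(\partial_t^r+\partial_x^m)(\partial_t-\partial_x^2)\bar{\omega}_{l-1}\|_0 + h^2\|\partial_x(\partial_t^r+\partial_x^m)\bar{\omega}_{l-1}\|_0\bigr),
\]
which, starting from $\bar{\omega}_0 = E^yu$ and using \eqref{optimalerror} for $P^{(y)}$, gives \eqref{w2error} by induction on $l$.

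\emph{Step 2 (the inequality \eqref{correction_2eqresult}).} By the symmetric analogue of \eqref{aExusimplify}, integration by parts yields
\[
a(E^yu,v) = ((\partial_t-\partial_x^2)E^yu,v) + ((f_1'(u)E^yu)_x,v) - (f_2'(u)E^yu,v_y) - \sum_{\tau_{i,j}} \int_{\tau_i^x} \alpha_2 [E^yu][v]\big|_{y_{j+1/2}} dx,
\]
after which the defining relations \eqref{w2a}--\eqref{w2c} make the sum $\sum_{l=1}^{p}$ telescope, leaving a residual of the form $b(\bar{\omega}_p,v)$ that involves only the highest-order correction. Bounding this residual by Cauchy--Schwarz, the trace and inverse inequalities, and \eqref{w2error} produces exactly $Ch^{2(k+p+1)}\|u\|_{k+p+2}^2 + C\|v\|_0^2 + \frac{\gamma}{8}\|v\|_E^2$. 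If the statement is to be read (by symmetry with Theorem \ref{correction_1result}) with a Gauss--Lobatto projection $Q_h^x$ applied to each $\bar{\omega}_l$, the extra difference terms $a(\bar{\omega}_l - Q_h^x\bar{\omega}_l, v)$ are split into the $(\partial_t\cdot,v)$, $A(\cdot,v)$ and $\mathcal{F}(\cdot,v)$ contributions and bounded using \eqref{Q_norm} combined with \eqref{w2error}, exactly as in Step 2 of the proof of Theorem \ref{correction_1result}; the localization constants $\epsilon_l$ are chosen so that $\sum_{l=1}^p \epsilon_l = 1$ and the $\|v\|_E^2$ terms absorb into the left-hand side.

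\emph{Anticipated obstacle.} No essentially new difficulty arises; the work is bookkeeping. The only point requiring attention is the reversal of which interface-integral terms vanish automatically: for $\omega_l$, continuity along $y$-interfaces was immediate since $\omega_l$ was built as a polynomial in $x$ on each strip $\tau_i^x$, whereas for $\bar{\omega}_l$ the analogous continuity now holds along $x$-interfaces. Consequently the manipulations involving $\{\cdot\}$ and $[\cdot]$ on edges are simply transposed, but the resulting magnitudes of all terms, and hence the final bound, are identical to the $x$-direction case.
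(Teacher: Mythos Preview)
Your proposal is correct and follows exactly the approach the paper takes: the paper omits the proof entirely, stating only that it is ``similar to that for $a(E^xu + \sum_{l=1}^p\omega_l,v)$,'' i.e., one swaps the roles of $x$ and $y$ throughout Lemma~\ref{correction_1 error} and Theorem~\ref{correction_1result}. Your observation about the reversal of which interface terms vanish (continuity along $x$-edges for $\bar\omega_l$ rather than $y$-edges) and your remark that the stated sum should really carry a $Q_h^x$ projection to match Theorem~\ref{correction_1result} are both well placed.
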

   Here we omit the proof since it is similar to that for $a(E^xu + \displaystyle\sum_{l=1}^p\omega_l,v)$.

\subsection{Estimate}
Define the final correction function 
\begin{equation}\label{finalcorrection^1}
	\omega:=\omega^p =  \displaystyle\sum_{l=1}^p  (Q_h^y\omega_l+Q_h^x\bar \omega_l),
\end{equation}
where ${\omega_l}$ and $\bar{\omega_l}$ are defined in \eqref{w1} and \eqref{w2}, respectively.
% According to conclusions in the first two subsection, we have the following estimates.

\begin{theorem}\label{correction^1final-result}
	Assume that $u\in H^{k+p+3},~ 1\leq p\leq k-1$ is the solution of \eqref{2dCD} and $\Pi_h u$ is the projection of $u$ defined in \eqref{projection_glo}.  Suppose that ${\bf f}(u)\in C^2$. 
		Then 
	\begin{equation}\label{correction^1final-eqresult}
		\left\lvert  a(u-\Pi_hu +\omega^p,v) \right\rvert\leq Ch^{2(k+p+1)}\|u\|_{k+p+3}^2 + \frac {3\gamma} {4}\|v\|_E^2+ C\|v\|_0^2, \quad 1\leq p\leq k-1.
	\end{equation}
\end{theorem}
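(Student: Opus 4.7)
The plan is to use the decomposition $u-\Pi_h u = E^x u + E^y u - E^x E^y u$ from \eqref{sub_projection} together with the definition $\omega^p=\sum_{l=1}^p(Q_h^y\omega_l+Q_h^x\bar\omega_l)$ in \eqref{finalcorrection^1} to regroup
\[
 u-\Pi_h u+\omega^p = \Bigl(E^x u + \sum_{l=1}^p Q_h^y \omega_l\Bigr) + \Bigl(E^y u + \sum_{l=1}^p Q_h^x \bar\omega_l\Bigr) - E^x E^y u.
\]
By bilinearity of $a(\cdot,\cdot)$ and the triangle inequality, the first parenthesis is bounded by Theorem \ref{correction_1result}, and the second by the analogous $y$-directional statement, namely the version of Theorem \ref{correction_2result} carried through with $Q_h^x\bar\omega_l$ in place of $\bar\omega_l$, which follows by repeating the proof of Theorem \ref{correction_1result} with the roles of $x$ and $y$ swapped. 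Each contributes at most $C h^{2(k+p+1)}\|u\|_{k+p+2}^2 + \tfrac{\gamma}{4}\|v\|_E^2 + C\|v\|_0^2$, so these two pieces account for $\tfrac{\gamma}{2}\|v\|_E^2$ of the target $\tfrac{3\gamma}{4}\|v\|_E^2$.

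The remaining and only non-trivial task is to estimate the cross term $|a(E^x E^y u, v)|$, for which no correction function has been constructed. The crucial observation is that $E^x E^y u = (I-P^{(y)})E^x u$ is a doubly-applied projection error. Iterating \eqref{optimalerror} in the two directions and using the commutativity of $\partial_x$ with $P^{(y)}$ together with the $y$-smoothness of $E^x u$, one obtains
\[
\|E^x E^y u\|_0 + h\|\nabla(E^x E^y u)\|_0 + \|\partial_t(E^x E^y u)\|_0 \lesssim h^{k+p+2}\|u\|_{k+p+3},
\]
which is one $h$-power sharper than what the target bound demands. With these estimates I will treat the pieces of $a(E^x E^y u,v) = (\partial_t(E^x E^y u),v) + A(E^x E^y u,v) - \mathcal{F}(E^x E^y u,v)$ separately via the Cauchy--Schwarz and Young inequalities, keeping the gradient representation $(\nabla\varphi,\nabla v)$ inside $A$ so that no inverse inequality on $v$ needs to be invoked. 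The $y$-interface boundary contributions vanish by construction: \eqref{Zc} applied to $P^{(y)}$ gives $\{E^x E^y u\}|_{y_{j+1/2}}=0$, and \eqref{Zb} together with the smoothness of $E^x u$ in $y$ gives $\widehat{\partial_y(E^x E^y u)}|_{y_{j+1/2}}=0$; the remaining $x$-interface contributions are controlled by combining a trace inequality with the sharp $L^2$ bound above. The nonlinear functional $\mathcal{F}$ is handled identically, since its structure parallels that of $A$.

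Summing the three contributions and using $\|u\|_{k+p+2}\le\|u\|_{k+p+3}$ yields \eqref{correction^1final-eqresult}. I expect the most delicate step to be the $x$-interface terms in $A(E^x E^y u, v)$, where neither the jump $[E^x E^y u]$ nor any natural flux is obviously of the correct order and the penalty coefficient scales like $h^{-1}$; there, the extra $h^{p+1}$ gained from the double projection, combined with a trace inequality in the $y$-variable, is essential to extract enough powers of $h$ to absorb the $h^{-1}$-scaled penalty into $\tfrac{\gamma}{4}\|v\|_E^2$ and leave a residual of the correct order $h^{2(k+p+1)}$.
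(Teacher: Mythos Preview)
Your overall plan---splitting via \eqref{sub_projection} into the $E^x$-, $E^y$-, and cross-projection pieces and invoking Theorems~\ref{correction_1result} and \ref{correction_2result}---matches the paper exactly. The only substantive divergence is in how you handle the cross term $a(E^xE^yu,v)$, and there your argument contains a gap.

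You claim that ``the $y$-interface boundary contributions vanish by construction,'' citing $\{E^xE^yu\}|_{y_{j+1/2}}=0$ and $\widehat{\partial_y(E^xE^yu)}|_{y_{j+1/2}}=0$. Both identities are correct, but in the un-integrated form of $A$ that you say you are keeping, the $y$-edge contribution is $[v]\,\widehat{\varphi_y}+[\varphi]\,\{v_y\}$ (cf.\ \eqref{DDGsubterm}); the second summand involves the \emph{jump} $[E^xE^yu]$, not the average, and this jump does not vanish. The same issue recurs at the $x$-edges and in the $[v][\varphi]\alpha$ term of $\mathcal{F}$. You can repair this by applying the trace-inequality argument you already propose for the $x$-edges to \emph{all} of these jump terms---the extra factor $h^{p+1}$ from the double projection is enough to absorb the $h^{-1}$ penalty scaling---but then none of the interface terms ``vanish by construction,'' and the distinction you draw between the $x$- and $y$-edges disappears.

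The paper's route for this piece is both simpler and sharper: instead of bounding $A(E^xE^yu,v)$, it integrates by parts in \emph{both} coordinate directions. This converts each edge term $[\varphi]\{\partial_\nu v\}$ into $-\{\varphi\}[\partial_\nu v]$ (via \eqref{sideofibp}, as in the derivation of \eqref{Aetasimply}), which now \emph{does} vanish by your observation $\{E^xE^yu\}=0$; at the same time the bulk terms become $-(E^xE^yu,v_{xx})-(E^xE^yu,v_{yy})$, each of which is zero by the orthogonality \eqref{Za} applied in the respective direction. Hence $A(E^xE^yu,v)=0$ exactly, and only the $\partial_t$ and convection pieces of $a$ survive, which are bounded just as you describe.
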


\begin{proof}
	As  a direct consequence of \eqref{sub_projection}, \eqref{correction_1eqresult} and \eqref{correction_2eqresult},  we have for all $v \in V_h^k$
	\begin{equation}\label{correction^1onestep}
		\left\lvert  a(u-\Pi_hu +\omega^p,v) \right\rvert\leq Ch^{2(k+p+1)}\|u\|_{k+p+2}^2 + \frac {\gamma} 2 \|v\|_E^2+ 2\|v\|_0^2 +| a(E^xE^yu,v)|.
	\end{equation}
To estimate $a(E^xE^yu,v)$, we use the properties of $E^x$ and $E^y$ and the integration by parts to derive 
 \[
			A(E^xE^yu, v) =0,\ \ 
			\mathcal{F}(E^xE^yu,v)=(f'_1(u)E^xE^yu,v_x)+ (f'_2(u)E^xE^yu,v_y), 
\]
	 and thus 
\begin{eqnarray*}
			|a(E^xE^yu,v)|  &= &\left\lvert  \left(  \partial_t E^xE^y u,v \right)  - \left( f'_1(u)E^xE^yu,v_x\right)-\left(f'_2(u)E^xE^yu,v_y\right)\right\rvert\\
			&\le  &  \frac \gamma 4 \|v\|_E^2 +  \|v\|^2_0  + C(\|E^xE^yu\|^2_{0}+\|\partial^2_t E^xE^yu\|_{0}).
\end{eqnarray*}
  	 Here  $C$ is a constant independent of $h$. Using the estimates of $E^x, E^y$, we get 
\[
		\|E^yE^xu\|_{0} \lesssim h^{m} \|\partial_y^m E^xu\|_{0}\lesssim h^{m+r} \|\partial_y^m \partial_x^r u\|_{0},\ \ 0\le m,r\le k+1. 
\]
Similarly, there holds 
\[
	\|E^yE^xu_t\|_{0} \lesssim  h^{m+r} \|\partial_y^m \partial_x^r u_t\|_{0}\lesssim h^{m+r}\|u\|_{m+r+2},\ \ 0\le m,r\le k+1. 
\]
  Consequently, 
\begin{equation}\label{aExEyuestimate}
		\left\lvert  a(E^xE^y u, v) \right\rvert  \leq  \frac \gamma 4 \|v\|_E^2 + C \|v\|^2_0 + Ch^{2(k+1+p)} \|u\|^2_{k+p+3}.
\end{equation}
Substituting \eqref{aExEyuestimate}  into \eqref{correction^1onestep} yields the desired result \eqref{correction^1onestep}.
\end{proof}

\begin{remark}
	The estimates \eqref{aExEyuestimate} indicates that $a(E^xE^yu,v)$ is of high order  and no correction function is need for  $a(E^xE^yu,v)$. 
	\end{remark}

\section{Superconvergence}

% In this section, we study and reveal the superconvergence phenomenon of the DDG method for two-dimensional nonlinear convection diffusion equation. In the numerical examples, we observe a $(k + 2)$-th order superconvergence of the numerical solution at Lobatto points, $(2k)$-th order at nodes, and a $(k + 1)$-th order of the derivative approximation at Gauss points. All of these observations  will be confirmed in the rest of this section. 
Define
\begin{equation}\label{ui}
   u_I:=u_I^p=\Pi_hu-\omega^p. 
\end{equation}
To study the superconvergence behavior of the DDG solution,  we begin with the analysis of the supercloseness between the projection function $u_I$ and  $u_h$.

\begin{theorem}\label{supercolsenessu^1_I-u_h}
	Let $u\in H^{k+p+3}$ with $1\le p\le k-1$ be the solution of \eqref{2dCD}, and $u_I$ the special projection of $u$ defined in \eqref{ui}. Assume that  $u_h$  is the solution of the DDG scheme \eqref{DDGloc} with the initial solution chosen such that 
\begin{equation}\label{initialcondition}
	\|u_h - u_I\|(0) \lesssim h^{k+p+1}\|u_0\|_{k+p+3}.
\end{equation}
		Then 
	\begin{equation}\label{erroreu^1_I-u_h}
		\|u_I^p-u_h\|_0(t) \lesssim  h^{k+p+1} \|u\|_{k+p+3},\quad t>0.
	\end{equation}
\end{theorem}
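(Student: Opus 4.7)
The plan is to combine the basic error inequality derived at the end of Section 3 with the supercloseness estimate for the correction function from Theorem 3.2, and then close the argument via Gronwall's inequality. Let me write $\xi = u_h - u_I$ and $\eta = u - u_I = u - \Pi_h u + \omega^p$, exactly as in \eqref{denote} and \eqref{ui}. The starting point is the differential inequality \eqref{eq:1}, which (after absorbing $\mathcal{F}$ into the bilinear form $a(\cdot,\cdot)$ via \eqref{a0}) can be rewritten as
\begin{equation*}
  \tfrac{1}{2}\tfrac{d}{dt}\|\xi\|_0^2 + \tfrac{3\gamma}{4}\|\xi\|_E^2 \;\leq\; a(\eta,\xi) + Ch^{2(2k+1)}\|u\|_{k+1}^2 + C\|\xi\|_0^2.
\end{equation*}

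The key input is Theorem \ref{correction^1final-result}: since $\eta = u - \Pi_h u + \omega^p$, taking $v=\xi$ there yields
\begin{equation*}
  |a(\eta,\xi)| \;\leq\; Ch^{2(k+p+1)}\|u\|_{k+p+3}^2 + \tfrac{3\gamma}{4}\|\xi\|_E^2 + C\|\xi\|_0^2.
\end{equation*}
Substituting back and using that $2(2k+1) \geq 2(k+p+1)$ for $p \leq k-1$ (so the coarser residual term in \eqref{eq:1} is absorbed by the finer one), the two $\tfrac{3\gamma}{4}\|\xi\|_E^2$ terms cancel exactly and I am left with the scalar differential inequality
\begin{equation*}
  \tfrac{d}{dt}\|\xi\|_0^2 \;\leq\; C\|\xi\|_0^2 + Ch^{2(k+p+1)}\|u\|_{k+p+3}^2.
\end{equation*}

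From here it is pure Gronwall. Integrating in time on $[0,t]$ and invoking the hypothesis \eqref{initialcondition} on the initial data gives
\begin{equation*}
  \|\xi\|_0^2(t) \;\leq\; e^{Ct}\Bigl(\|\xi\|_0^2(0) + Ch^{2(k+p+1)}\!\int_0^t \|u(s)\|_{k+p+3}^2\,ds\Bigr) \;\lesssim\; h^{2(k+p+1)}\|u\|_{k+p+3}^2,
\end{equation*}
where on the right I use the standard abbreviation for the maximum of $\|u(s)\|_{k+p+3}$ over $s\in[0,t]$ (which is finite by the regularity assumption $u\in H^{k+p+3}$). Taking square roots yields \eqref{erroreu^1_I-u_h}.

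The proof is mechanical once Theorem \ref{correction^1final-result} is in hand, so there is no substantive obstacle; the only point requiring care is the exact matching of the constant $\tfrac{3\gamma}{4}$ in front of $\|\xi\|_E^2$ on both sides so that the diffusion-type energy term cancels and leaves no residual $\|\xi\|_E$-contribution, which is the reason the bound in Theorem 3.2 was stated with precisely that coefficient. The rest is bookkeeping plus Gronwall.
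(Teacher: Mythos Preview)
Your proof is correct and follows exactly the same route as the paper: combine the basic error inequality \eqref{eq:1} (rewritten via \eqref{a0} as an estimate involving $a(\eta,\xi)$) with Theorem~\ref{correction^1final-result}, cancel the $\tfrac{3\gamma}{4}\|\xi\|_E^2$ terms, and conclude by Gronwall together with the initial condition \eqref{initialcondition}. The paper's own proof compresses precisely these steps into two lines.
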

\begin{proof}
	 As a direct consequence of \eqref{eq:1}-\eqref{a0} and \eqref{correction^1final-eqresult}, 
	\begin{equation}\label{errorequaleft}
		  \frac 1 2 \frac{d}{dt}\|\xi\|_0^2\lesssim  C \|\xi\|^2_0 + Ch^{2(k+1+p)} \|u\|^2_{k+p+3}.
	\end{equation}
Then the desired result follows from the Gronwall inequality and \eqref{initialcondition}. 
\end{proof}

\subsection{Superconvergence of numerical fluxes at nodes and for the cell avareges} 
 
\begin{theorem} \label{at nodes}
	Let $u\in H^{k+p+3}$ with $0\le p\le k-1$ be the solution of \eqref{2dCD}, and $u_I$ the special projection of $u$ defined in \eqref{ui}. Assume that  $u_h$  is the solution of the DDG scheme \eqref{DDGloc} with the initial solution chosen such that  \eqref{initialcondition} holds with $p=k-1$. 
		Then 
  \begin{eqnarray}\label{en}
 e_n:= \left( \frac 1 {N_xN_y} \displaystyle\sum_{i=1}^{N_x} \displaystyle\sum_{j=1}^{N_y} (u-\{u_h\})^2(x_{i+\frac 1 2},y_{j+\frac 1 2},t)\right)^{\frac 1 2}
		   \lesssim h^{2k} \|u \|_{2k+2}. 
		   %,\\\label{ec}
% && e_{c} : =\left( \frac{1}{N_xN_y}\sum_{\tau\in {\cal T}_h}\Big(\frac{1}{|\tau|}\int_{\tau}(u-u_h)\Big)^2dxdy\right)^{\frac
%  12}\lesssim h^{2k}  \|u \|_{2k+2}. 
 \end{eqnarray} 
\end{theorem}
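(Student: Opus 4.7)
The plan is to reduce the nodal error to the supercloseness quantity $\xi = u_h - u_I$ by exploiting two exact identities at the interior nodes $(x_{i+\frac12},y_{j+\frac12})$. Write the nodal error as
\[
u - \{u_h\} \;=\; \bigl(u - \{\Pi_h u\}\bigr) \;+\; \{\Pi_h u - u_I\} \;-\; \{u_h - u_I\} \;=\; \bigl(u - \{\Pi_h u\}\bigr) + \{\omega\} - \{\xi\},
\]
where $\{\cdot\}$ at a vertex means the average of the four neighboring cell values (iterated $x$-then-$y$ averaging). The goal is to show the first two quantities vanish \emph{exactly} at every node, so that only $\{\xi\}$ remains.

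For $u - \{\Pi_h u\}$, I would use $\Pi_h = P^{(x)}\otimes P^{(y)}$ together with property \eqref{Zc}. Taking the $x$-average at $x_{i+\frac12}$ first: since $P^{(y)}u$ is smooth in $x$, the condition \eqref{Zc} gives $\{P^{(x)}[P^{(y)}u]\}(x_{i+\frac12},y) = (P^{(y)}u)(x_{i+\frac12},y)$. Then taking the $y$-average at $y_{j+\frac12}$ and using the analogous property of $P^{(y)}$ along with the smoothness of $u(x_{i+\frac12},\cdot)$, the iterated average collapses to $u(x_{i+\frac12},y_{j+\frac12})$. Hence $u - \{\Pi_h u\} \equiv 0$ at every node. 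For $\{\omega\}$, recall $\omega = \sum_{l=1}^{k-1}(Q_h^y\omega_l + Q_h^x\bar\omega_l)$. Since $\omega_l$ is smooth in $y$ and the Lobatto projection $Q_h^y$ preserves endpoint values, $Q_h^y\omega_l(x,y_{j+\frac12})=\omega_l(x,y_{j+\frac12})$ continuously in $y$ across the interface; thus the $y$-average equals $\omega_l(x,y_{j+\frac12})$. Taking the $x$-average at $x_{i+\frac12}$ and invoking condition \eqref{w1c}, which states $\{\omega_l\}|_{x_{i+\frac12}}=0$, gives $\{Q_h^y\omega_l\}(x_{i+\frac12},y_{j+\frac12})=0$. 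A symmetric argument using \eqref{w2c} handles $\{Q_h^x\bar\omega_l\}$. Therefore $\{\omega\} \equiv 0$ at every node.

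With these two cancellations, $(u - \{u_h\})(x_{i+\frac12},y_{j+\frac12}) = -\{\xi\}(x_{i+\frac12},y_{j+\frac12})$, so
\[
e_n^2 \;=\; \frac{1}{N_xN_y}\sum_{i,j}\{\xi\}^2(x_{i+\frac12},y_{j+\frac12}).
\]
Since each node is a corner of four elements, $\{\xi\}^2$ at the node is bounded by the average of the four one-sided values squared; summing over nodes converts this to a sum of $\xi^2$ evaluated at corners of elements. Invoking the standard norm-equivalence inverse estimate $\sum_{\text{4 corners}}\xi^2 \lesssim h^{-2}\|\xi\|_{0,\tau}^2$ on $\mathbb{Q}_k(\tau)$, and using $\frac{1}{N_xN_y}\lesssim h^2$, I obtain $e_n \lesssim \|\xi\|_0$. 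Finally, Theorem \ref{supercolsenessu^1_I-u_h} applied with $p=k-1$ yields $\|\xi\|_0 \lesssim h^{2k}\|u\|_{2k+2}$, which is the desired bound.

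The main obstacle will be establishing the two exact nodal cancellations rigorously, in particular keeping track of which direction of averaging exploits which property (the projection conditions \eqref{Zc}, \eqref{w1c}, \eqref{w2c} for the single-direction edge averages, combined with the endpoint-preservation of $Q_h^x, Q_h^y$, and the continuity in the orthogonal direction of $P^{(y)}u$, $\omega_l$, and $\bar\omega_l$). Once those identities are in hand, the rest is a one-line inverse estimate plus a direct application of the supercloseness result.
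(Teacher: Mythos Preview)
Your proposal is correct and follows essentially the same route as the paper's proof: both establish the exact nodal identities $\{\Pi_h u\}=u$ and $\{\omega^{k-1}\}=0$ at every vertex (using \eqref{Zc}, \eqref{w1c}, \eqref{w2c} together with the endpoint-preservation of $Q_h^x,Q_h^y$), reduce $e_n$ to a discrete sum of $\{\xi\}$ values, bound that by $\|\xi\|_0$ via an inverse inequality, and finish with Theorem~\ref{supercolsenessu^1_I-u_h} at $p=k-1$. Your write-up is slightly more explicit than the paper's about the iterated $x$-then-$y$ averaging at a vertex and about why $Q_h^y\omega_l$ is continuous across $y$-interfaces, but the underlying argument is the same.
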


\begin{proof}
	 On the one hand,  by choosing $p = k-1$ in 
	 \eqref{erroreu^1_I-u_h}, we obtain
	\begin{equation}\label{superclose value}
   		\|u^k_I-u_h\|(t)_0\lesssim h^{2k} \|u \|_{2k+2}.
	\end{equation}

On the one hand, we directly obtain from \eqref{w1c} and \eqref{w2c}
\[
    \{Q_h^y\omega_l\}({x_{i+\frac 1 2},y}) = \{\omega_l\}({x_{i+\frac 1 2},y})= 0,\
    \{Q_h^x\bar \omega_l\}({x, y_{i+\frac 1 2}}) = \{\bar \omega_l\}({x,y_{j+\frac 1 2},y})=0. 
\]
   Consequently, 
 \[
    \{\omega^{p}\}(x_{i+\frac 12}, y_{j+\frac 12})=0, \ \ \forall p\le k-1, 
 \]
   and thus 
	\begin{equation*}
		\{u\}(x_{i+\frac 12}, y_{j+\frac 12})= \{\Pi_h u \} (x_{i+\frac 12}, y_{j+\frac 12})= \{u_I^k \} (x_{i+\frac 12}, y_{j+\frac 12}).
			\end{equation*}
Then a direct calculation from the inverse inequality yields that 
\begin{eqnarray*}
    e_n=  \left( \frac 1 {N_xN_y} \displaystyle\sum_{i=1}^{N_x} \displaystyle\sum_{j=1}^{N_y} (\{u_I^{k-1}\}-\{u_h\})^2(x_{i+\frac 1 2},y_{j+\frac 1 2},t)\right)^{\frac 1 2}
			 \lesssim  \|u_I^{k-1}-u_h \|_0.
\end{eqnarray*}
Then  \eqref{en} follows from \eqref{superclose value}.  
\end{proof}

\subsection{Superconvergence at Lobatto and Gauss points} 

% First, we introduce a special projection, the Gauss-Lobatto projection, which palys an essential role in this confirmation.
Denote by $L_{\mu }$ the Legendre polynomial of degree ${\mu }$, and $\{\varphi_{\mu }\}_{\mu = 0}^\infty $
the series of Lobatto polynomials, on the interval $[-1,1]$. That is, 
$\varphi_0=\frac{1-s}{2}, \varphi_1=\frac{1+s}{2},$ and 
\begin{equation*}
	\varphi_{{\mu }+1}=\int^s_{-1}L_{\mu }(s')\mathrm ds', \quad {\mu } \geq 1,s\in[-1,1].
\end{equation*}
%Therefore we obtain Lobatto polynomials for two-dimensional by taking tensor product of those for one dimensional, namely 
%\begin{equation*}
%	\varphi_{{\mu },{\nu }}=\varphi_{{\mu }}(\xi)\varphi_{{\nu }}(\eta), \quad \forall (\xi,\eta) \in [-1,1]\times [-1,1] .
% \end{equation*}
Let $L_{i,{\mu }}$ and $\varphi_{i,{\mu }}$ be the Legendre and Lobatto polynomials of degree $\mu$ on the interval $\tau^x_i$ respectively. 
That is,
\begin{equation*}
	L_{i,\mu} (x)= L_\mu(\xi),\quad  \varphi_{i,\mu}(x)=\varphi_{\mu}(\xi),\quad \xi = \frac {2x-x_{i+\frac 1 2}- x_{i-\frac 1 2}}{2}, \quad \mu \geq 0.
\end{equation*}
Zeros of $L_{i,{k }}$ and $\varphi_{i,{k+1 }}$ are called the Gauss points of degree $k$ and Lobatto points of degree $k+1$ on $\tau_i^x$,  respectively. 
By tensor product, we can obtain the $k^2$ Gauss points ${\cal G}_\tau$ and $(k+1)^2$ Lobatto points ${\cal L}_{\tau}$ on each interval $\tau\in{\cal T}_h$. 
We denote by  ${\cal G}$ and   ${\cal L}$ the set of Gauss and Lobatto points on the  whole domain, respectively.

In  each element $\tau_i^x$,  denote by $I^{(x)}v\in \mathbb P_{k}(x)$ the Gauss-Lobatto projection of  $v$ along the $x$-direction satisfying 
\begin{equation}
	I^{(x)} v |_{\tau _i^x}= \displaystyle \sum _{\mu=0}^k  v_{i,\mu} \phi_{i,\mu}(x), 
\end{equation}
  where 
 \[
     v_{i,0} =v(x_{i-\frac 12}),\ \ v_{i,1} =v(x_{i+\frac 12}),\ \  v_{i,\mu}=\frac{2\mu-1}{2}\langle v_x, L_{i,\mu-1}\rangle_{\tau_i^x},\ \ \mu\ge 2. 
 \]
   Similarly, we can define the Gauss-Lobatto projection $I^{(y)}$ of  $v$ along the $y$-direction. 
% \begin{table}[!h]
% 	\centering
% 	\begin{tabular}{c|c|c|c}
% 		\hline
% 		 a & $p=0$ &$p = 1$ &$p2\leq p \leq k$
% 		% \hline
% 		% 1 & 2 &3 &4
% 		% $q = 0$ & $v(x^+_{i-\frac 1 2},y^+_{j-\frac 1 2})$ & $v(x^-_{i+\frac 1 2},y^+_{j-\frac 1 2})$ & $\frac{2 p-3}{2} \int_{I_{i}} \partial_{x} v (x,y^+_{j-\frac 1 2})L_{i, p-1} \math rm d x$
% 		% \hline
% 		% $q = 1$ & $v(x^+_{i-\frac 1 2},y^-_{j+\frac 1 2})$ & $v(x^-_{i+\frac 1 2},y^-_{j+\frac 1 2})$ & $\frac{2 p-3}{2} \int_{I_{i}} \partial_{x} v (x,y^-_{j+\frac 1 2})L_{i, p-1} \math rm d x$
% 		% \hline 
% 		% $2\leq q \leq k$ & $\frac{2 q-3}{2} \int_{I_{j}} \partial_{x} v (x,y^+_{j-\frac 1 2})L_{i, p-1} \math rm d x$ &$\frac{2 p-3}{2} \int_{I_{i}} \partial_{x} v (x,y^-_{j+\frac 1 2})L_{i, p-1} \math rm d x$ & $\frac{2 p-3}{2} \int_{I_{i}} \partial_{x} v (x,y^-_{j+\frac 1 2})L_{i, p-1} \math rm d x$
% 		\hline	
% 	\end{tabular}
% 	\caption{}
% \end{table}
 Define 
\begin{equation}
	I_h v = I^{(x)} \otimes I^{(y)} v. 
\end{equation}

\begin{lemma}[]\label{supercloseness}
	For any function $v \in H^{k+2}$, if $\beta_1=\frac{1}{2k(k+1)}$, then 
	\begin{equation}\label{eq:9}
		\|\Pi_h v-I_h v\|_0 \lesssim h^{k+2}\|v\|_{k+2}.
	\end{equation}
\end{lemma}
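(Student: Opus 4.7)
The plan is to exploit the tensor-product structure of both projections to reduce the two-dimensional estimate to a one-dimensional supercloseness statement, and then to show that the specific choice $\beta_1 = \frac{1}{2k(k+1)}$ kills the leading interface error in that one-dimensional problem. I would first split
\[
\Pi_h v - I_h v = (P^{(x)} - I^{(x)})\, P^{(y)} v + I^{(x)}(P^{(y)} - I^{(y)}) v,
\]
and, using the $L^2$-boundedness of $I^{(x)}$ together with the bound \eqref{optimalerror} for $P^{(y)}$, reduce to the one-dimensional estimate $\|(P^{(x)} - I^{(x)})w\|_0 \lesssim h^{k+2}\|w\|_{k+2}$ applied slicewise (the $y$-direction version is analogous).

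For the one-dimensional estimate, let $\phi := P^{(x)}w - I^{(x)}w$. The key observation is that the Gauss--Lobatto projection $I^{(x)}$ satisfies the \emph{same} interior orthogonality \eqref{Za} as $P^{(x)}$: integration by parts combined with the endpoint interpolation gives $\int_{\tau_i^x}(I^{(x)}w - w)\partial_x^2 q\,dx = 0$ for all $q \in \mathbb{P}_k(\tau_i^x)$, and continuity of $I^{(x)}w$ yields $\{I^{(x)}w\} = \{w\}$ at each interface. Subtracting the defining equations of the two projections therefore shows that $\phi$ satisfies $\int_{\tau_i^x}\phi\, r\,dx = 0$ for every $r \in \mathbb{P}_{k-2}(\tau_i^x)$, $\{\phi\}(x_{i+1/2}) = 0$, and $\widehat{\phi_x}(x_{i+1/2}) = \{(w - I^{(x)}w)_x\} + \beta_1 h [(w - I^{(x)}w)_{xx}]|_{x_{i+1/2}}$. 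The orthogonality forces $\phi|_{\tau_i^x} \in \mathrm{span}\{L_{i,k-1}, L_{i,k}\}$, leaving two unknowns per cell, and the interface conditions then produce a block-circulant $2N_x \times 2N_x$ linear system identical in structure to the matrix $M$ from the proof of Lemma~\ref{correction_1 error}, uniformly invertible under $\beta_0 \geq \Gamma(\beta_1)$.

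It therefore remains to show that the right-hand side of the flux condition is of order $h^{k+2}$. Expanding $w = \sum_\mu w_{i,\mu}\varphi_{i,\mu}$ in the Lobatto basis, so that $w - I^{(x)}w = \sum_{\mu \geq k+1}w_{i,\mu}\varphi_{i,\mu}$, and using $(\varphi_{i,\mu})_x = (2/h)L_{i,\mu-1}$ together with $L_m(\pm 1) = (\pm 1)^m$ and $L_m'(\pm 1) = (\pm 1)^{m-1}\tfrac{m(m+1)}{2}$, a direct computation yields
\[
h\bigl(\{(w - I^{(x)}w)_x\} + \beta_1 h [(w - I^{(x)}w)_{xx}]\bigr)(x_{i+1/2}) = \sum_{\mu \geq k+1}\Bigl(1 - 2\beta_1\,\mu(\mu-1)\Bigr)\bigl(w_{i,\mu} - (-1)^\mu w_{i+1,\mu}\bigr).
\]
The choice $\beta_1 = \frac{1}{2k(k+1)}$ makes the coefficient at $\mu = k+1$ vanish identically, and a parity-sensitive Taylor expansion of $w_{i,\mu} = c_\mu h^\mu w^{(\mu)}(x_i) + O(h^{\mu+2})$ around $x_{i+1/2}$, combined with the cell-to-cell cancellations in $w_{i,\mu} \pm w_{i+1,\mu}$, bounds each surviving summand by $O(h^{k+3})$ at every interface. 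Inverting the uniformly bounded circulant system exactly as in the proof of Lemma~\ref{correction_1 error} and summing over $i$ then gives $\|\phi\|_0 \lesssim h^{k+2}\|w\|_{k+2}$, which together with the tensor splitting proves \eqref{eq:9}.

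The main obstacle is this last algebraic step: verifying that after the $\beta_1$-induced cancellation of the $(\mu = k+1)$ contribution, the interplay between the parity of $\mu$ (which determines whether $w_{i,\mu} \pm w_{i+1,\mu}$ acts as a finite difference or a sum of smooth-function values) and the algebraic coefficients $1 - \mu(\mu-1)/(k(k+1))$ leaves a residual of order $h^{k+3}$ rather than the generic $h^{k+2}$, so that the improved pointwise bound $|b^i|\lesssim h^{k+2}$ on the flux residual, and hence the target rate $h^{k+2}$ (in place of the naive $h^{k+1}$), is realized.
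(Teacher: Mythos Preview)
Your overall strategy coincides with the paper's: use the tensor splitting to reduce to the one-dimensional estimate $\|P^{(x)}w-I^{(x)}w\|_0\lesssim h^{k+2}\|w\|_{k+2}$, then combine with boundedness of the remaining one-dimensional operator. The paper simply cites this one-dimensional supercloseness from \cite{super-cao-cd}, whereas you supply a self-contained argument; that is a legitimate and more informative route, and your identification of the mechanism---that $\phi=P^{(x)}w-I^{(x)}w$ lies in $\mathrm{span}\{L_{i,k-1},L_{i,k}\}$, satisfies $\{\phi\}=0$, and is driven by the flux residual through the same circulant matrix $M$ as in Lemma~\ref{correction_1 error}---is exactly right, including the key computation showing that $\beta_1=\frac{1}{2k(k+1)}$ annihilates the $\mu=k+1$ contribution.

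The one place you go astray is the ``main obstacle'' you flag at the end. You do \emph{not} need the surviving summands to be $O(h^{k+3})$, and the parity-based cell-to-cell cancellation you invoke does not hold uniformly: for odd $k$ the combination $w_{i,k+2}-(-1)^{k+2}w_{i+1,k+2}=w_{i,k+2}+w_{i+1,k+2}$ is a sum, not a difference, and stays at order $h^{k+2}$. Fortunately this extra order is unnecessary. Once the $\mu=k+1$ term is killed, the straightforward bound $|w_{i,k+2}|\lesssim h^{k+3/2}\|\partial_x^{k+2}w\|_{0,\tau_i^x}$ (or $|w_{i,k+2}|\lesssim h^{k+2}|w|_{k+2,\infty}$ pointwise) already gives $\sum_i|b^i|^2\lesssim h^{2k+3}\|w\|_{k+2}^2$, and then the circulant inversion plus $\|\phi\|_0^2\sim h\sum_i|c_i|^2$ yields $\|\phi\|_0\lesssim h^{k+2}\|w\|_{k+2}$ directly. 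So your argument is complete as soon as you drop the search for an extra cancellation and simply estimate the $\mu\ge k+2$ tail.
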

\begin{proof} When $\beta=\frac{1}{2k(k+1)}$, there holds for all $s\le k$ that (see, e.g., \cite{super-cao-cd}) 
	\begin{equation*}\label{superclosenessPi&I_h1d}
			\|P^{(x)}v-I^{(x)}v\|_{0,\tau^x_{i}} \lesssim h^{s+2}\|\partial_x^{s+2}v\|_{ 0,\tau^x_{i}}, \ \ 
			\|P^{(y)}v-I^{(y)}v\|_{0,\tau^y_{j}} \lesssim h^{s+2}\|\partial_y^{s+2}v\|_{ 0,\tau^y_{j}}. 
	\end{equation*}
  Then 
\begin{eqnarray*}
	\|\Pi_h v-I_h v\|_0  &=& \|P^{(x)}(P^{(y)}v) - I^{(x)}(I^{(y)}v)\|_0\\
	&\leq& \|P^{(x)}(P^{(y)}v - I^{(y)}v)\|_0 + \|I^{(y)}(P^{(x)}v- I^{(x)}v)\|_0\\
	&\lesssim & h^{k+2}\|v\|_{k+2}.
\end{eqnarray*}
  Here in the last step, we have used the fact both the Gauss-Lobatto projection $I^{(y)}$ and  the  projection $P^{(x)} $  are bounded.   The proof is complete. 
  \end{proof}

Now we are ready to present the superconvergence of DDG solution at Gauss points and Lobatto points. 

\begin{theorem}\label{Lobatto and Gauss}
	Let $u \in H^{k+4}$ be the solution of \eqref{2dCD}, and $u_h$ be the solution of \eqref{DDGloc} with the initial value $u_h (\cdot,\cdot, 0)$ chosen such that \eqref{initialcondition} holds with $p= 1$.
	If $\beta_1=\frac{1}{2k(k+1)}$, then  for any fixed $t \in (0,T]$
\begin{eqnarray}\label{Lobatto-Gauss}
			e_{g} :&=& \left(\frac 1 {N_x N_yk^2 }\displaystyle \sum_{\tau\in{\cal T}_h}\sum_{z\in{\cal G}_\tau}\nabla (u-u_h)^2(z,t)\right)^{\frac 1 2} 
			 \lesssim  h^{k+1} \|u\|_{k+4},\\\label{el}
			 e_l :&= & \left(\frac 1 {N_x N_y(k+1)^2 }\displaystyle\sum_{\tau\in{\cal T}_h}\sum_{z\in{\cal L}_\tau}(u-u_h)^2(z,t)\right)^{\frac 1 2} \lesssim  
		   h^{k+2}  \|u\|_{k+4}.
\end{eqnarray}
\end{theorem}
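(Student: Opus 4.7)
The plan is to insert the Gauss--Lobatto interpolation $I_h u$ (defined before Lemma~\ref{supercloseness}) and the corrected projection $u_I^1 = \Pi_h u - \omega^1$ between the exact and numerical solutions, producing the decomposition
\begin{equation*}
	u - u_h \;=\; (u - I_h u) \;+\; (I_h u - \Pi_h u) \;+\; \omega^1 \;+\; (u_I^1 - u_h).
\end{equation*}
The last three terms all lie in $V_h^k$. Lemma~\ref{supercloseness}, valid precisely because $\beta_1 = 1/(2k(k+1))$, gives $\|I_h u - \Pi_h u\|_0 \lesssim h^{k+2}\|u\|_{k+2}$; Lemma~\ref{correction_1 error} with $l=1$ together with the $L^2$-stability of $Q_h^x, Q_h^y$ gives $\|\omega^1\|_0 \lesssim h^{k+2}\|u\|_{k+2}$; Theorem~\ref{supercolsenessu^1_I-u_h} with $p=1$ and the initial condition \eqref{initialcondition} give $\|u_I^1 - u_h\|_0(t) \lesssim h^{k+2}\|u\|_{k+4}$.

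For the Lobatto-point estimate \eqref{el}, I exploit the one-dimensional fact that the Lobatto expansion of $v - I^{(x)}v$ has leading term proportional to $\varphi_{i,k+1}$, whose zeros are the Lobatto points of degree $k+1$. By the tensor-product structure of $I_h$ this propagates to $|u - I_h u|(z) \lesssim h^{k+2}\|u\|_{k+2}$ at every $z \in \mathcal L$. For the three polynomial remainders I use the norm equivalence
\begin{equation*}
    \Bigl(\tfrac{1}{N_xN_y(k+1)^2}\sum_{\tau\in\mathcal T_h}\sum_{z\in\mathcal L_\tau} v(z)^2\Bigr)^{1/2} \lesssim \|v\|_0, \qquad \forall\, v \in V_h^k,
\end{equation*}
which follows from $(k+1)$-point Gauss--Lobatto quadrature and cell scaling. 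Summing the four contributions yields $e_l \lesssim h^{k+2}\|u\|_{k+4}$.

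For the Gauss-point derivative estimate \eqref{Lobatto-Gauss}, the analogous one-dimensional fact is that $\partial_x(v - I^{(x)}v)$ has leading Legendre term proportional to $L_{i,k}$, whose zeros are the Gauss points. Using the splitting $\partial_x(u - I_h u) = (I - I^{(y)})\partial_x u + I^{(y)}\partial_x(u - I^{(x)}u)$ together with the uniform $L^\infty$-boundedness of $I^{(y)}$, one obtains $|\nabla(u - I_h u)|(z) \lesssim h^{k+1}\|u\|_{k+2}$ at every $z \in \mathcal G$. For polynomial remainders in $V_h^k$, $k$-point Gauss quadrature on $|\nabla v|^2$ combined with the inverse inequality $\|\nabla v\|_0 \lesssim h^{-1}\|v\|_0$ gives
\begin{equation*}
    \Bigl(\tfrac{1}{N_xN_y k^2}\sum_{\tau\in\mathcal T_h}\sum_{z\in\mathcal G_\tau}|\nabla v(z)|^2\Bigr)^{1/2} \lesssim h^{-1}\|v\|_0,\qquad \forall\, v \in V_h^k.
\end{equation*}
Applied to the three polynomial remainders, each of $L^2$-norm $O(h^{k+2})$, this contributes $O(h^{k+1})$, which combines with the $I_h u$-piece to give \eqref{Lobatto-Gauss}. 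The main technical obstacle is the tensor-product derivative superconvergence of $I_h$ at Gauss points: while the 1D result is classical, the cross term $I^{(y)}\partial_x(u - I^{(x)}u)$ requires the uniform boundedness of $I^{(y)}$ along $y$-sections to preserve the $h^{k+1}$ rate; everything else reduces to $L^2$ supercloseness bounds already established in Sections~3--4 and standard quadrature-based norm equivalences on the finite-dimensional space $V_h^k$.
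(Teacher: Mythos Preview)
Your proposal is correct and follows essentially the same route as the paper. The paper first collapses your three polynomial remainders into the single quantity $I_hu-u_h$ (bounding $\|\Pi_hu-u_h\|_0\lesssim\|u_I^1-u_h\|_0+\|\omega^1\|_0$ and then invoking Lemma~\ref{supercloseness}), whereas you keep the three pieces separate; and the paper cites \cite{chen} for the pointwise bounds $|(u-I_hu)(z)|\lesssim h^{k+2}$ at Lobatto points and $|\nabla(u-I_hu)(z)|\lesssim h^{k+1}$ at Gauss points, while you sketch the Lobatto/Legendre expansion argument directly. The passage from pointwise values of a $V_h^k$ function to its $L^2$ norm is handled in the paper via the elementwise $L^\infty$ bound plus the inverse inequality $\|v\|_{0,\infty,\tau}\lesssim h^{-1}\|v\|_{0,\tau}$, which is equivalent to your quadrature-based norm equivalence. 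These are organizational differences only; the key ingredients (Theorem~\ref{supercolsenessu^1_I-u_h} with $p=1$, Lemma~\ref{supercloseness} under $\beta_1=\tfrac{1}{2k(k+1)}$, the $\omega^1$ estimate, and the interpolation superconvergence of $I_h$) are identical.
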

\begin{proof}   First, we choose $p=1$ in \eqref{erroreu^1_I-u_h} and use the triangle inequality to derive 
\[
    \|P_hu-u_h\|_0\lesssim  \|u_I^1-u_h\|_0+\|\omega^1\|_0\lesssim h^{k+2}\|u\|_{k+4}, 
\]
 which yields, together with \eqref{supercloseness} that 
\begin{equation}\label{eq:8}
   \|I_hu-u_h\|_0\lesssim h^{k+2}\|u\|_{k+4}. 
\end{equation}
 Using the inverse inequality, we have 
\begin{equation}\label{eq:9}
    \|I_hu-u_h\|_1\lesssim h^{k+1}\|u\|_{k+4}. 
\end{equation}
  On the other hand,  there holds (see, e.g., \cite{chen}) that 
\[
   | (u-I_hu)(z_0)|\lesssim h^{k+2}|u|_{k+2,\infty},\ \  |\nabla  (u-I_hu)(z_1)|\lesssim h^{k+1}|u|_{k+2,\infty},\ \ \forall z_0\in{\cal L}, z_1\in{\cal G}. 
\]
  Then 
 \begin{eqnarray*}
     e_l &\lesssim & \left(\frac 1 {N_x N_y }\displaystyle \sum_{\tau\in{\cal T}_h}\sum_{z\in{\cal L}}(I_hu-u_h)^2(z,t)\right)^{\frac 1 2} + \max_{z\in{\cal L}}|(u-I_hu)(z,t)|\\
		&\lesssim &\left( h^{-2}\sum_{\tau\in{\cal T}_h} \|I_hu-u_h\|^2_{0,\infty,\tau}\right)^{\frac 12}+h^{k+2}\|u\|_{k+2,\infty}\lesssim \|I_hu-u_h\|_0+h^{k+2}\|u\|_{k+2,\infty}. 
 \end{eqnarray*}
   Here in the last step, we have used the inverse inequality. 
 Similarly, there holds 
 \begin{eqnarray*}
     e_g &\lesssim &  \left(\frac 1 {N_x N_y }\displaystyle \sum_{\tau\in{\cal T}_h}\sum_{z\in{\cal G}}\nabla(I_hu-u_h)^2(z,t)\right)^{\frac 1 2} + \max_{z\in{\cal G}}|\nabla(u-I_hu)(z,t)|\\
		&\lesssim & \|I_hu-u_h\|_1+h^{k+2}\|u\|_{k+2,\infty}. 
 \end{eqnarray*}
Then the desired result  \eqref{Lobatto-Gauss} follows from \eqref{eq:8} and \eqref{eq:9}. 
\end{proof}

\begin{remark}\label{remark for beta_1}
	Same as the one-dimensional case,  the choice of $\beta_1$ in \eqref{dif-flux} has influence on the superconvergence  phenomenon of the DDG solution $u_h$ at Gauss and Lobatto points. 
	However, it does not affect the superconvergence rate of $u_h$ at nodes.  Our numerical examples  will demonstrate this fact. 
\end{remark}

 To end with this section, we would like to demonstrate how to obtain the initial solution. 
  To ensure the validity of  \eqref{initialcondition},   a nature method for 
  the initial discretization is to choose
\begin{equation*}
	u_h(x,y,0) = u_I^p(x,y,0), \ \ p\le k-1. 
\end{equation*}
 Then we divide the initial discretization into the following steps:

 1. According to the deﬁnition of $\Pi _h u$, compute ${\omega_l},{\bar \omega_l},1\leq l\leq p$ by \eqref{w1} and \eqref{w2}.

 2.  Calculate $Q_h^y\omega_l, Q_h^x\bar\omega_l, 1\leq l\leq p$. 

 3. Compute $\omega^p = \displaystyle\sum_{l=1}^{p}(Q_h^y\omega_l+ Q_h^x\bar \omega_l)$.

 4. Figure out $u_h(x,y,0)= \Pi_h u - \omega^p $.

\section{Numerical tests}

In this section, we provide numerical examples to validate our theoretical results.
% Due to the symmetry of the selected examples, in order to save palce, we only show the errors of $e_l,e_n,e_{gx}$ whose definations lie in \eqref{Lobatto&Gauss}, \eqref{nodes}, and $e_{gl_x}$ defined as
%\begin{equation}
%	e_{gl_x} = (\frac 1 {N_x N_y k k_p}\displaystyle \sum_{i=1}^{N_x}\sum_{j=1}^{N_y}\sum_{m=1}^{k}\sum_{n=1}^{k_p}\partial_x(u-u_h)^2(g_{i,m},y_{j,p}))^{\frac 1 2},
%\end{equation}
%where $y_{j,k_p}$ is a arbitrary subdivision of $\tau_j^y$, namely $y_{j-\frac 1 2} = y_{j,1} = \cdots = y_{j,k_p} = y_{j+\frac 1 2 }$, to observe the superconvergence phenomenon of $u_h$.
% Actually numerical performance of $e_{gl_x}$  means we even have superconvergence on the gauss lines in two dimensions. In this section, we choose $k_p = 100$ and uniform subdivision to verify the result on gauss lines. 
In our experiments, 
we adopt the DDG method as spatial discretization and the classic fourth order Runge-Kutta method as our time discretization  method. 
The convection flux is chosen as   Godnov  numerical flux. 
 The CFL condition is taken as $ \tau = {\cal O}(h^2)$, where $\tau$  is the temporal step size. 
 We obtain our meshes by equally dividing $[0,2\pi]\times [0,2\pi]$ into $N \times N$ subintervals with $N=4,\ldots 32$. 

\begin{example}
	Consider the following equation with the periodic boundary condition:
\begin{equation*}
	\begin{cases}
		u_t+(\frac {u^2} {2})_x+(\frac {u^2} {2})_y = u_{xx}+u_{yy} + g(x,y,t), & (x,y,t) \in [0,2\pi] \times [0,2\pi] \times (0,1]\\
		u(x,y,0) = sin(x+y),&(x,y,t) \in [0,2\pi] \times [0,2\pi], 
	\end{cases}
 \end{equation*}
where the source term $g(x, t)$ is  chosen such that the exact solution is
 \begin{equation*}
	u = e^{(-2t)}sin(x+y).
 \end{equation*}
 
\end{example}

	% equation*} 
	% 		\begin{split}
	% 		&u_t+(\frac {u^2} {2})_x+(\frac {u^2} {2})_y = u_{xx}+u_{yy} + g(x,y,t), \quad (x,y,t) \in [0,2\pi] \times [0,2\pi] \times (0,1]\\
	% 		&u(x,y,0) = sin(x+y),
	% 	\end{split}
	% \end{equation*}

	\begin{table}[!h]
		\centering
		\small{\caption{\label{Tab: bugurs1} \emph{Errors and convergence rates for $f_1(u)=f_2(u)=\frac1 2 u^2$ and $\beta_1 = \frac {1}{2k(k+1)}$.}}}
		\begin{tabular}{cccccccccc}
		% $~ $  &       &\multicolumn{5}{c}{RSV flux}& \multicolumn{5}{c}{RSV solution}\\
			\hline
			$k$&$N\times N $ &$e_{l}$   &rate    &$e_{n}$  &rate   &$e_{g_x}$  &rate     &$\|e_h\|_0$   &rate \\
			\hline
			\multirow{4}{*}{$1$}
			% &$~128$   &       &      &      &       &      &     &     &     &     &\\
			&$4\times 4$ &6.3e-03	&—	&2.4e-03	&—	&2.6e-02	&—		&1.6e-01	&—\\
			&$8\times 8$ &1.1e-03	&2.5 	&4.6e-04	&2.4 	&7.3e-03	&1.8 	 	&4.4e-02	&1.8 \\
			&$16\times 16$ &2.5e-04	&2.2 	&1.2e-04	&2.0 	&1.9e-03	&2.0 		&1.1e-02	&2.0\\ 
			&$32\times 32$ &6.2e-05	&2.0 	&3.0e-05	&2.0 	&4.7e-04	&2.0 	 	&2.8e-03	&2.0\\ 
			\hline
			\multirow{4}{*}{$2$}
			&$4\times 4$ &4.7e-04	 &—	 &1.7e-04	 &—	 &2.4e-03	 &—	  &1.4e-02	 &—\\
			&$8\times 8$ &2.1e-05	 &4.5 	 &3.2e-06	 &5.7 	 &2.9e-04	 &3.0 	 	 &1.7e-03	 &3.1 \\
			&$16\times 16$ &1.2e-06	 &4.1 	 &1.4e-07	 &4.5 	 &3.6e-05	 &3.0 	 	 &2.1e-04	 &3.0 \\
			&$32\times 32$ &7.6e-08	 &4.0 	 &9.5e-09	 &3.9 	&4.4e-06	 &3.0 	 &2.6e-05	 &3.0 \\
			\hline
			\multirow{4}{*}{$3$}
			&$4\times 4$ &2.7e-05 &	—	 &1.7e-05	 &—	 &2.1e-04	 &—	  &1.2e-03	 &—\\
			&$8\times 8$ &1.0e-06	 &4.8 	 &3.1e-07	 &5.8 	 &1.4e-05	 &3.9 		 &7.5e-05	 &4.0 \\
			&$16\times 16$ &3.3e-08	 &4.9 	 &4.9e-09	 &6.0 	 &9.1e-07	 &4.0 		 &4.7e-06	 &4.0 \\
			&$32\times 32$ &1.1e-09	 &5.0 	 &7.7e-11	 &6.0 	 &5.8e-08	 &4.0 		 &3.0e-07	 &4.0 \\
			\hline
			\multirow{4}{*}{$4$}
			&$4\times 4$ &1.9e-06	 &—	 &2.3e-07	 &—	 &1.7e-05	 &—	 	 &9.2e-05	 &—\\
			&$8\times 8$ &2.1e-08	 &6.4 	 &7.8e-10	 &8.2 	 &4.9e-07	 &5.1 		 &2.9e-06	 &5.0 \\
			&$16\times 16$ &2.9e-10	 &6.2 	 &3.1e-12	 &8.0 	 &1.5e-08	 &5.0 		 &9.0e-08	 &5.0 \\
			&$32\times 32$ &4.4e-12	 &6.1 	 &1.5e-14	 &7.7 	 &4.7e-10	 &5.0 	 &2.8e-09	 &5.0 \\
			\hline
		\end{tabular}
	\end{table}
	\begin{table}[!h]
		\centering
		\small{\caption{\label{Tab: bugurs2} \emph{Errors and convergence rates for $f_1(u)=f_2(u)=\frac1 2 u^2$ and $\beta_1 \neq \frac {1}{2k(k+1)}$.}}}
		\begin{tabular}{cccccccccc}
		% $~ $  &       &\multicolumn{5}{c}{RSV flux}& \multicolumn{5}{c}{RSV solution}\\
			\hline
			$k$&$ N\times N $ &$e_{l}$   &rate    &$e_{n}$  &rate   &$e_{g_x}$  &rate     &$\|e_h\|_0$   &rate \\
			\hline
			\multirow{4}{*}{$1$}
			&$4\times 4$ &6.3e-03	&—	&2.4e-03	&—	&2.6e-02	&—		&1.6e-01	&—\\
			&$8\times 8$ &1.1e-03	&2.5 	&4.6e-04	&2.4 	&7.3e-03	&1.8 	 	&4.4e-02	&1.8 \\
			&$16\times 16$ &2.5e-04	&2.2 	&1.2e-04	&2.0 	&1.9e-03	&2.0 		&1.1e-02	&2.0\\ 
			&$32\times 32$ &6.2e-05	&2.0 	&3.0e-05	&2.0 	&4.7e-04	&2.0 	 	&2.8e-03	&2.0\\
			\hline
			\multirow{4}{*}{$2$}
			&$4\times 4$ &6.7e-04	 &—	 &1.7e-04	 &—	 &1.9e-03	 &—	 &1.3e-02	 &—\\
			&$8\times 8$ &9.9e-05	 &2.8 	 &3.0e-06	 &5.8 	 &2.5e-04	 &2.9 		 &1.5e-03	 &3.1 \\
			&$16\times 16$ &1.3e-05	 &2.9 	 &1.2e-07 &4.7 	 &4.6e-05	 &2.4 	  	 &1.8e-04	 &3.0 \\
			&$32\times 32$ &1.7e-06	 &3.0 	 &7.4e-09	 &4.0 	 &1.0e-05	 &2.2 		 &2.2e-05	 &3.0 \\
			\hline
			\multirow{4}{*}{$3$}
			&$4\times 4$ &8.4e-05 &	—	 &1.5e-05	 &—	 &2.9e-04	 &—	  &1.1e-03	 &—\\
			&$8\times 8$ &3.3e-06	 &4.7 	 &3.0e-07	 &5.6 	 &2.2e-05	 &3.7 	 	 &7.4e-05	 &3.9 \\
			&$16\times 16$ &1.1e-07	 &4.9 	 &4.8e-09 	&5.9 	 &1.5e-06	 &3.9 	   &4.7e-06	 &4.0 \\
			&$32\times 32$ &3.5e-09	 &5.0 	 &7.6e-11	 &6.0 	 &9.3e-08	 &4.0 	 	 &3.0e-07	 &4.0 \\
			\hline
			\multirow{4}{*}{$4$}
			&$4\times 4$ &1.5e-05	 &—		 &3.1e-07	 &—	 &5.7e-05	 &—  &1.2e-04 	 &—\\
			&$8\times 8$ &5.2e-07	 &4.8 		&1.3e-09	 &7.9 	 &3.6e-06	 &4.0 		 &4.1e-06	 &4.9 \\
			&$16\times 16$ &1.7e-08	 &5.0 	&5.2e-12	 &7.9 	 &2.2e-07	 &4.0 	 	 &1.3e-07	 &5.0 \\
			&$32\times 32$ &5.3e-10	 &5.0 	&2.3e-14	 &7.8 	 &1.4e-08	 &4.0 		 &4.1e-09	 &5.0 \\
			\hline
			
		\end{tabular}
\end{table}

	% We take different  values of $(\beta_0 , \beta_1 ) $ to test the  influence of the choice of parameters in the DDG numerical  fluxes  \eqref{dif-flux} on the superconvergence rate.
 Listed  in Table \ref{Tab: bugurs1} are  various errors and the corresponding convergence rates for $k = 1,2,3,4$, and $T =1$, with the parameter$(\beta_0,\beta_1)$ taken as $(12,\displaystyle\frac 1 4)$$(12,\displaystyle\frac 1 {12})$$(12,\displaystyle\frac 1 {24})$$(12,\displaystyle\frac 1 {40})$ for $k=1,2,3,4$, respectively.  Note that $\beta_1 = \displaystyle\frac 1 {2k(k+1)}$ in this case. 
  From Talbe \ref{Tab: bugurs1},  we observe  a optimal convergence rate of $(k+1)$-th order for the $L^2$ error $\|e_h\|_0$, 
 a superconvergence rates of  $2k$ for the error $e_n$ at nodes, 
 $(k+2)$-th order for the error $e_l$ at Lobatto points and  $(k+1)$-th order for the error  $e_{g}$ at Gauss points. 
 These results are consistent with the  theoretical results given in  Theorems \ref{at nodes} and \ref{Lobatto and Gauss}, which indicates that 
 superconvergence   of the function value approximation at Lobatto points and nodes,  and superconvergence of the derivative approximation at Gauss points 
 exist. 
 
To test the infulence of $\beta_1$ on the superconvergence rate, we also test the case in which $\beta_1 \neq \displaystyle\frac {1}{2k(k+1)} $. Table \ref{Tab: bugurs2} demonstrates the error and corresponding convergence rate for $k=1,2,3,4$, with $(\beta_0,\beta_1) = (12,\displaystyle\frac 1 {40}),(12,\displaystyle\frac 1 4),(12,\displaystyle\frac 1 {12}),(12,\displaystyle\frac 1 {24}),$ respectively. 
As indicated by Table \ref{Tab: bugurs2},  the convergence rates of $e_l$ and $e_g$ in case  for $k=2,4$  are separately $k+1$ and $k$, which indicates that the 
  superconvergence  phenomena at Gauss and Lobatto  points disappear. 
While we still observe a supconvergence rate of $2k$-th order for the error $e_n$.
 In other words, the choice of $\beta_1$ affects the superconvergence of $u_h$ at Gauss and Lobatto points
while the superconvergence properties of $u_h$ at nodes are independent of the value of $\beta_1$ in the DDG schemes.

\begin{example}
	Consider the following equation with the periodic boundary condition:
\begin{equation*}
	\begin{cases}
		u_t+(sin(u))_x+(sin(u))_y = u_{xx}+u_{yy} + g(x,y,t), & (x,y,t) \in [0,2\pi] \times [0,2\pi] \times (0,1]\\
		u(x,y,0) = sin(x+y),&(x,y,t) \in [0,2\pi] \times [0,2\pi].
	\end{cases}
 \end{equation*}
We choose a suitable  $g(x, y, t)$  such that the exact solution is
 \begin{equation*}
	u = e^{(-2t)}sin(x+y).
 \end{equation*}

\end{example}

\begin{table}[!h]
	\centering
	\small{\caption{\label{Tab: sinu} \emph{Errors and convergence rates for $f_1(u)=f_2(u)=sin(u)$.}}}
	\begin{tabular}{cccccccccccc}
% $~ $  &       &\multicolumn{5}{c}{RSV flux}& \multicolumn{5}{c}{RSV solution}\\
		\hline
		$k$&$m\times n $ &$e_{l}$   &rate    &$e_{n}$  &rate   &$e_{g_x}$  &rate     &$L^2$   &rate \\
	\hline
		\multirow{4}{*}{$1$}
		% &$~128$   &       &      &      &       &      &     &     &     &     &\\
		&$4\times 4$ &8.2e-02	&—	&3.2e-02	&—	&3.1e-02	&—		&2.0e-01	&—\\
		&$8\times 8$ &1.8e-02	&2.2 	&7.9e-03	&2.0 	&8.9e-03	&1.8 		&5.4e-02	&1.9 \\
		&$16\times 16$ &4.3e-03	&2.0 &	2.0e-03	&2.0 	&2.3e-03	&2.0 		&1.4e-02	&2.0\\ 
		&$32\times 32$ &1.1e-03	&2.0 	&5.2e-04	&2.0 	&5.8e-04	&2.0 		&3.5e-03	&2.0 \\
		\hline
		\multirow{4}{*}{$2$}
		% &$~128$   &       &      &      &       &      &     &     &      &     & \\
		&$4\times 4$   &1.5e-03   &—  &4.9e-04  &—   &3.6e-03  &—  &2.0e-02 &—  \\
		&$8\times 8$   &1.0e-04   &3.9 &4.9e-05  &3.3  &4.6e-04  &3.0  &2.4e-03 &3.0  \\
		&$16\times 16$   &6.4e-06   &4.0 &3.4e-06  &3.8  &5.7e-05  &3.0  &3.0e-04 &3.0  \\
		&$32\times 32$   &4.0e-07   &4.0 &2.3e-07  &3.9  &7.2e-06  &3.0  &3.7e-05 &3.0  \\
		\hline
		\multirow{4}{*}{$3$}

		&$4\times 4$ &7.5e-05	&—	&2.6e-05	&—	&3.4e-04	&—		&1.7e-03	&— \\
		&$8\times 8$ &2.7e-06	&4.8 	&5.2e-07	&5.6 	&2.4e-05	&3.9 		&1.1e-04	&4.0 \\
		&$16\times 16$ &8.8e-08	&4.9 	&9.0e-09	&5.9 	&1.5e-06	&4.0 		&6.7e-06	&4.0 \\
		&$32\times 32$ &2.8e-09	&5.0 	&1.5e-10	&6.0 	&9.5e-08	&4.0 		&4.2e-07	&4.0 \\

		\hline
		\multirow{4}{*}{$4$}

		&$4\times 4$ &4.6e-06	&—	&5.4e-07	&—	&2.7e-05	&—		&1.3e-04	&—\\
		&$8\times 8$ &6.1e-08	&6.2 	&2.6e-09	&7.7 	&7.9e-07	&5.1 	 	&4.1e-06	&5.0 \\
		&$16\times 16$ &8.9e-10	&6.1 	&1.1e-11	&7.9 	&2.4e-08	&5.0 		&1.3e-07	&5.0\\ 
		&$32\times 32$ &1.4e-11	&6.0 	&6.2e-14	&7.4 	&7.4e-10	&5.0 		&4.0e-09	&5.0 \\

		\hline
	\end{tabular}

\end{table}
% In Example 1, we have tested the effect of $\beta_1$ on the phenomenon of superconvergence. Therefore, we only show the numerical experiments with $\beta_1 = \displaystyle\frac{1}{2k(k+1)}$.
We list in Table \ref{Tab: sinu} the approximation errors and corresponding convergence rates calculated by the DDG scheme for $k = 1,2,3,4$, 
 with the parameter chosen as $(\beta_0,\beta_1) = (12,\frac {1}{4}),(12,\frac {1}{12}),(12,\frac {1}{24}),(12,\frac {1}{40})$,  respectively. 
 As the same with Example 1, we observe the function value error is supercovergent at Lobatto points and the derivative error is superconvergent at Gauss points, with an order of $k+2$ and $k+1$, respectively. 
 Moreover, we see the error of numerical fluxes at mesh nodes are superconvergent with an order of $2k$. All these numerical results are consistent with the theoretical findings in  Theorems \ref{at nodes} and \ref{Lobatto and Gauss}. In other words, the error bounds established in   \eqref{en}  and \eqref{Lobatto-Gauss}-\eqref{el} are sharp.

\section{Conclusion}
In this paper, we investigate superconvergence properties of the DDG method for two-dimensional nonlinear convection-diffusion equations using tensor product meshes and tensor product polynomials of degree $k$. We prove that, with suitable initial  discretizations, the error between the DDG solution and the exact solution converges with an order of $(k+2)$ at  Lobatto points, with $(2k)$ at nodes,  and the convergence rate of derivative approximation  
at Gauss points can  achieve $(k+1)$.  Numerical experiments demonstrate that all the established error bounds  are optimal.
% The sharpness of the theoretical results is veriﬁed by numerical experiments.

% \section*{Appendix 1}

\newpage

% \bibliography{yinyong}

\end{document}